\documentclass[11pt]{article}

\pdftrailerid{}
\usepackage[T1]{fontenc}
\usepackage{lmodern}
\usepackage{amsmath,amssymb,amsthm,mathtools,mathrsfs}
\usepackage[margin=1.02in]{geometry}
\usepackage{booktabs,tabularx,array,longtable}
\usepackage{microtype}
\usepackage{xcolor}
\usepackage{enumitem}
\usepackage[colorlinks=true,linkcolor=blue!50!black,
  citecolor=blue!50!black,urlcolor=blue!50!black]{hyperref}

\hypersetup{
  pdftitle={The second pole of Witten zeta functions and exact evaluations in types F4 and D5},
  pdfauthor={Jonas Matuzas},
  pdfsubject={Witten zeta functions, the first pole below the leading pole, projective chamber periods, and exact F4 and D5 residues},
  pdfkeywords={Witten zeta function, second pole, root system, projective chamber period, Varchenko matrix, Dixon sum, Selberg integral, F4, D5}}

\allowdisplaybreaks
\setlist[itemize]{leftmargin=1.5em,itemsep=2pt,topsep=3pt}
\setlist[enumerate]{leftmargin=1.7em,itemsep=2pt,topsep=3pt}

\newtheorem{theorem}{Theorem}[section]
\newtheorem{proposition}[theorem]{Proposition}
\newtheorem{corollary}[theorem]{Corollary}
\newtheorem{lemma}[theorem]{Lemma}
\newtheorem{remark}[theorem]{Remark}

\newcommand{\R}{\mathbb{R}}

\newcommand{\Q}{\mathbb{Q}}

\newcommand{\Res}{\operatorname*{Res}}

\newcommand{\Disc}{\operatorname{Disc}}
\newcommand{\dd}{\,\mathrm{d}}
\newcommand{\Per}{\mathcal{P}}

\newcommand{\cD}{\mathscr{D}}

\newcommand{\eps}{\varepsilon}

\newcommand{\remd}{\operatorname{rem}}

\title{The second pole of Witten zeta functions\\
and exact evaluations in types \(F_4\) and \(D_5\)}
\author{Jonas Matuzas\\
\href{mailto:jonas.matuzas@gmail.com}{\nolinkurl{jonas.matuzas@gmail.com}}}
\date{}

\begin{document}
\maketitle

\begin{abstract}
Let \(\Phi\) be an irreducible reduced crystallographic root system of rank
\(r\ge2\), let \(N=|\Phi^+|\), and let \(h\) be its Coxeter number.  For the
normalized Witten zeta function \(\xi_\Phi\), we determine the first distinct
pole below the leading pole \(2/h\).  It occurs at
\[
 q_2(\Phi)=\frac{r-1}{N-1}=\frac{2(r-1)}{rh-2}.
\]
The pole is simple.  Its only contributions come from the codimension-one
simple-root faces, and
\[
 \operatorname*{Res}_{s=q_2}\xi_\Phi(s)
 =\frac{\zeta_{\mathrm R}(q_2)}{N-1}
  \sum_{i=1}^{r}\mathcal P_i(q_2)<0,
\]
where each wall period \(\mathcal P_i(q_2)\) is finite and positive.

We also prove a Stokes relation for projective arrangement periods.  If the
weights satisfy the critical homogeneity condition and the usual strict
flat-integrability inequalities, then the vector of positive chamber periods
is a null vector of the associated Varchenko matrix.  This relation makes it
possible to evaluate wall periods that are not accessible from a single
chamber.  In type \(F_4\), a two-orbit chamber relation and Dixon's
\({}_3F_2(1)\) summation give a pure gamma-product formula for the residue at
\(3/23\).  In type \(D_5\), a four-orbit relation and Selberg's integral give
a pure gamma-product formula for the complete wall sum and the residue at
\(4/19\).
\end{abstract}

\noindent\textbf{2020 Mathematics Subject Classification.}
Primary 11M41; Secondary 20F55, 22E46, 32S22, 33C67.

\noindent\textbf{Keywords.}
Witten zeta function; root system; second pole; residue; Varchenko
matrix; projective chamber period; Dixon summation; Selberg integral;
type \(F_4\); type \(D_5\).

\begingroup
\emergencystretch=1em
\medskip\noindent\textbf{Generative-AI disclosure and author responsibility.}\par
\noindent This work was produced using OpenAI's ChatGPT 5.6 Pro. The author directed
and audited the work throughout. Jonas Matuzas takes full responsibility for
the mathematics and the final text.
\endgroup

\section{Introduction}
\label{sec:introduction}

For a complex simple Lie algebra, or equivalently its simply connected compact
form, the Witten zeta function is
\begin{equation}\label{eq:witten-def-intro}
 \zeta_\Phi(s)=\sum_{\lambda\in P_+}(\dim V_\lambda)^{-s},
\end{equation}
where \(P_+\) is the set of dominant integral weights.  These functions arose
in two-dimensional gauge theory \cite{Witten1991} and are also representation
zeta functions.  Their abscissa of convergence is \(2/h\)
\cite{LarsenLubotzky2008,HasaStasinski2019}, and the residue at that leading
pole has a uniform Macdonald--Mehta evaluation \cite{MatuzasLeading2026}.

The analytic continuation of polynomial Dirichlet series and its candidate
singular geometry go back to Lichtin, Essouabri, and Peter
\cite{Lichtin1988,Essouabri1997,Peter1998}.  In the root-system setting,
Komori, Matsumoto, and Tsumura developed the multivariable theory and described
its possible singular hyperplanes
\cite{MatsumotoTsumura2006,KMT2010,KMT2011}
\cite[Chapter~7, Theorem~7.8]{KMTBook2023}.  For Euler--Zagier multiple zeta
functions, Zhao computed residues in arbitrary depth, and Akiyama, Egami, and
Tanigawa determined which candidate hyperplanes survive
\cite{Zhao2000,AkiyamaEgamiTanigawa2001}.  The companion paper
\cite{MatuzasGenericPoles2026} gives the corresponding generic divisor and
flag-residue formulas for root-system zeta functions.  Budur, Shi, and Zuo
provide a close recent comparison in the continuous setting of multivariable
Archimedean zeta integrals \cite{BudurShiZuo2025}; their local-integral problem
is different from the discrete lattice series considered here.

The first theorem of this paper identifies the first distinct pole below
\(2/h\) in every irreducible type.  If \(S\) is a proper set of simple nodes,
the unshifted face associated with \(S\) has candidate value
\[
 q(S)=\frac{|S|}{N(S)},
\]
where \(N(S)\) is the number of positive coroots whose support meets \(S\).
A strict comparison of parabolic root counts shows that the maximum of
\(q(S)\) is attained exactly when the complement of \(S\) is a single node.
This gives
\[
 q_2(\Phi)=\frac{r-1}{N-1}.
\]
The same strict inequalities imply convergence of the wall periods.  Since
the maximizing supports are pairwise incomparable and no full-support shifted
candidate occurs, no nested boundary flag can raise the pole order.  The
periods are positive and \(\zeta_{\mathrm R}(q_2)<0\), so the residue is
nonzero and negative.

The second theorem concerns real hyperplane arrangements.  Let \(P_D\) be the
positive projective period of a chamber \(D\), and let
\(V(a)\) be the Varchenko matrix with weights
\(a_H=e^{\pi i\lambda_H}\).  Under the critical equality
\(\sum_H\lambda_H=\dim V\) and strict inequalities on every proper flat, we
prove
\[
 V(a)P=0.
\]
Varchenko's determinant formula gives the degeneracy locus of this matrix, and
Falk--Varchenko construct the projective contravariant form at zero total
additive weight \cite{Varchenko1993,FalkVarchenko2012}.  The point needed here
is more specific: the positive real chamber-period vector itself is an
explicit null vector.  The proof is a direct Stokes argument on translated
spheres, with the boundary phases computed chamber by chamber.

The arrangement relation is then applied twice.  For \(F_4\), the relevant
rank-three restriction has two chamber orbits.  One orbit reduces to a
well-poised \({}_3F_2(1)\), evaluated by Dixon's formula, and the null-vector
relation transfers that value to the second orbit.  For \(D_5\), the
rank-four restriction has four chamber orbits.  One orbit is a Selberg
integral, and the same relation determines all five marked wall periods.
These calculations give exact gamma-product formulas for the normalized and
ordinary Witten-zeta residues.

Detailed single-variable calculations in ranks two and three are due to Romik
and Au \cite{Romik2017,Au2024}.  The \(A_2\), \(B_2\), and \(G_2\)
computations below are included only as normalization checks: they recover
Au's formulas exactly and are not used as evidence for the general theorem.
The new content is the uniform all-type result, the projective chamber-period
relation, and the higher-rank \(F_4\) and \(D_5\) evaluations.  No prior
uniform theorem or these two exact evaluations were found in the searched
literature; this statement is a description of that search, not an absolute
priority claim.

Section~\ref{sec:normalization} fixes the normalization and derives the candidate poles and simple-face residue formula used in the sequel.  Section~\ref{sec:universal-pole} proves the universal theorem, and Section~\ref{sec:marked-walls} records the geometric normalization of the wall periods.  Section~\ref{sec:chamber-kernel} proves the chamber-period relation.  Sections~\ref{sec:F4-period} and \ref{sec:D5-closed} contain the exact higher-rank evaluations.  Appendices~\ref{app:f4-certificate} and \ref{app:d5-certificate} record the finite cyclotomic calculations, while Appendix~\ref{sec:low-rank} gives low-rank normalization checks.

\paragraph{Scope.}
The paper does not claim a gamma-product formula for every wall period.  It
makes no transcendence or algebraic-independence assertion about the gamma
values or about \(\zeta_{\mathrm R}(q_2)\).  For reducible semisimple Lie
algebras, pole orders and Laurent coefficients are obtained by multiplying the
simple-factor zeta functions; there is no formula depending only on total rank
and total root count.

\section{Normalization and candidate poles}
\label{sec:normalization}

Let \(\Psi=\Phi^\vee\), choose simple coroots
\(\beta_1,\ldots,\beta_r\), and write
\[
 \beta=\sum_{i=1}^r b_i(\beta)\beta_i,
 \qquad b_i(\beta)\in\mathbb Z_{\ge0}
\]
for every positive coroot \(\beta\).  If
\(\lambda=\sum_i n_i\omega_i\) is a dominant highest weight, put
\(m_i=n_i+1\).  Weyl's dimension formula gives
\[
 \dim V_\lambda=\frac{P_\Phi(m)}{K_\Phi},
 \qquad
 P_\Phi(m)=\prod_{\beta\in\Psi^+}
 \left(\sum_{i=1}^r b_i(\beta)m_i\right),
\]
where
\[
 K_\Phi=\prod_{\beta\in\Psi^+}\langle\rho,\beta\rangle
 =\prod_{j=1}^r e_j!
\]
and \(e_1,\ldots,e_r\) are the exponents of \(\Phi\).  We use the normalized
function
\begin{equation}\label{eq:xi-def}
 \xi_\Phi(s)=K_\Phi^{-s}\zeta_\Phi(s)
 =\sum_{m\in\mathbb N^r}P_\Phi(m)^{-s}.
\end{equation}
Thus normalization does not change pole locations or orders.  If \(q\) is a
pole, then
\[
 \operatorname*{Res}_{s=q}\zeta_\Phi(s)
 =K_\Phi^q\operatorname*{Res}_{s=q}\xi_\Phi(s).
\]

Put \(I=\{1,\ldots,r\}\) and \(N=|\Psi^+|\).  For a nonempty subset
\(S\subseteq I\), define
\[
 R(S)=\{\beta\in\Psi^+:\operatorname{supp}(\beta)\cap S\ne\varnothing\},
 \qquad N(S)=|R(S)|.
\]
For a proper support \(S\), let
\[
 \Delta_S=\left\{u_i>0:\sum_{i\in S}u_i=1\right\},
\]
\[
 Q_S(u)=\prod_{\beta\in R(S)}
 \left(\sum_{i\in S}b_i(\beta)u_i\right),
 \qquad
 \mathcal P_S(q)=\int_{\Delta_S}Q_S(u)^{-q}\,du.
\]
The simplex carries the affine lattice measure obtained by eliminating one
coordinate.

\begin{proposition}[Candidate poles and simple-face residues]\label{prop:carrier-input}
Every diagonal pole belongs to
\begin{equation}\label{eq:diagonal-candidates}
 \left\{\frac{|S|-\ell}{N(S)}:
 \varnothing\ne S\subsetneq I,\ \ell\in\mathbb Z_{\ge0}\right\}
 \cup\left\{\frac rN\right\}.
\end{equation}
The full support contributes only \(r/N\).  If a proper support \(S\) is
regular at its unshifted value \(q=|S|/N(S)\), then its contribution to the
diagonal residue is
\begin{equation}\label{eq:carrier-residue}
 \frac1{N(S)}\mathcal P_S(q)\,
 \xi_{\Psi_{I\setminus S}}(q).
\end{equation}
A pole of order greater than one can occur only when the incident supports
contain a strict inclusion chain.
\end{proposition}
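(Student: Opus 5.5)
I plan to derive the candidate set and the residue from a Mellin–Barnes/Euler–Maclaurin decomposition of the lattice sum near the boundary faces of the orthant. The natural tool is a dyadic-free \emph{face decomposition}. Fix a proper nonempty \(S\subsetneq I\) and consider the region of \(m\in\mathbb N^r\) where the coordinates \(m_i\), \(i\in S\), are large while the coordinates \(m_j\), \(j\notin S\), stay bounded relative to them. There I would write \(m_i=t u_i\) with \(u\in\Delta_S\) and \(t=\sum_{i\in S}m_i\).

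The factors of \(P_\Phi(m)\) split in two. Roots \(\beta\in R(S)\) have linear forms \(t\sum_{i\in S}b_i(\beta)u_i+\sum_{j\notin S}b_j(\beta)m_j\), which equal \(tQ\)-type factors up to a perturbation of size \(O(1/t)\). Roots with support in \(I\setminus S\) give exactly \(P_{\Psi_{I\setminus S}}(m_{I\setminus S})\). The key identity is that \(\Psi_{I\setminus S}\), the parabolic subsystem, consists precisely of the positive coroots missed by \(R(S)\). This gives the local model
\[
 \sum_{m_{I\setminus S}}P_{\Psi_{I\setminus S}}(m_{I\setminus S})^{-s}\int t^{|S|-1-N(S)s}\,Q_S(u)^{-s}\bigl(1+O(t^{-1})\bigr)\,du\,dt .
\]
Expanding the perturbation \((1+\sum_j b_j(\beta)m_j/(t\,\cdot))^{-s}\) binomially in powers \(t^{-\ell}\), the \(t\)-integral over \(t\ge1\) produces poles at \(N(S)s=|S|-\ell\), which is the candidate set~\eqref{eq:diagonal-candidates}. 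The full support \(S=I\) has no complementary factor and no perturbation, because all coordinates scale, so it gives only \(r/N\).

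To make this rigorous I would use the standard multivariable continuation already cited (Essouabri, or Komori–Matsumoto–Tsumura, \cite[Theorem~7.8]{KMTBook2023}), which asserts meromorphy with singularities on hyperplanes indexed by faces. Restricting to the diagonal \(s_\beta=s\) then yields the candidate list. The companion generic-divisor result \cite{MatuzasGenericPoles2026} supplies the residue along each face hyperplane as a product of a face period and the complementary zeta function, and I would invoke it directly. At the unshifted value \(\ell=0\) the leading term of the \(t\)-integral, \(\int_1^\infty t^{|S|-1-N(S)s}\,dt\), has residue \(1/N(S)\) in \(s\). Multiplying by \(\mathcal P_S(q)\) and by the complementary sum \(\xi_{\Psi_{I\setminus S}}(q)\) gives~\eqref{eq:carrier-residue}. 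Here "regular" means that the complementary sum converges or is regular at \(q\) and that the period is finite, so no other factor contributes a pole.

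For the order statement, a pole of order \(k\) on the diagonal arises only when \(k\) face hyperplanes through the point belong to a common flag in the face stratification. Faces of the orthant intersect compatibly only along nested scalings, i.e.\ chains \(S_1\subsetneq S_2\subsetneq\cdots\) in which coordinates tend to infinity at different rates. Incomparable supports correspond to disjoint regions of the sum, so their singular contributions add and each stays simple.

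I expect the main obstacle to be justifying uniformity of the perturbative expansion near the boundary of \(\Delta_S\), where some \(u_i\to0\) and the region degenerates into a smaller face. This is exactly where the nested flags enter. I would first try a partition of unity on the projective simplex adapted to the stratification, and defer the analytic details to the cited multivariable theory.
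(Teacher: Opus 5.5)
Your proposal follows essentially the same route as the paper. You split off a proper support \(S\), expand the mixed root forms in powers \(t^{-\ell}\) of the complementary-to-large ratio, and read off the radial poles \(N(S)s=|S|-\ell\). You then factor the \(\ell=0\) residue as \(\frac1{N(S)}\mathcal P_S(q)\,\xi_{\Psi_{I\setminus S}}(q)\) and attribute higher order to nested faces forming strict chains, deferring the analytic details to the companion paper just as the paper does. Your binomial series is exactly the residue series the paper obtains by shifting Mellin--Barnes contours, and that contour shift is what justifies the expansion where \(|B/A|<1\) fails.

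The paper is slightly more explicit only on the full-support claim, where it makes your ``no perturbation'' remark precise with a partition of unity on the simplex. The interior piece is then exactly homogeneous, so Poisson summation leaves only the single denominator \(\sum_\beta s_\beta-r\), and the remaining angular pieces are the proper-support sectors.
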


\begin{proof}
We recall the part of the multivariable argument needed here.  Introduce one
exponent \(s_\beta\) for every positive coroot and consider
\[
 Z_\Phi(\mathbf s)
 =\sum_{m\in\mathbb N^r}
  \prod_{\beta\in\Psi^+}
  \left(\sum_i b_i(\beta)m_i\right)^{-s_\beta}.
\]
Fix a proper support \(S\) and split the summation variables into their
\(S\)- and \(S^c\)-coordinates.  For a root meeting both parts, write the
corresponding linear form as \(A+B\).  Mellin--Barnes expansion gives
\[
 (A+B)^{-s}
 =\frac1{2\pi i}\int_{(c)}
   \frac{\Gamma(s+z)\Gamma(-z)}{\Gamma(s)}A^{-s-z}B^z\,dz,
 \qquad -\Re s<c<0.
\]
Moving the contours to the right produces residues indexed by nonnegative
integers.  If the sum of those integers is \(\ell\), the block in the
\(S\)-variables is homogeneous of degree
\[
 -\sum_{\beta\in R(S)}s_\beta-\ell.
\]
Its radial integral can therefore have a pole only when
\[
 \sum_{\beta\in R(S)}s_\beta=|S|-\ell.
\]
On the diagonal this is the first set in \eqref{eq:diagonal-candidates}.  At
\(\ell=0\), the residue separates into the projective simplex integral and
the complementary root-system zeta function.  The derivative of the carrier
form along the diagonal is \(N(S)\), which gives
\eqref{eq:carrier-residue}.

For full support there is no complementary block.  A smooth partition of the
closed simplex separates the interior from neighborhoods of its proper faces.
The interior amplitude is exactly homogeneous, so Poisson summation leaves a
single radial denominator
\(\sum_{\beta\in\Psi^+}s_\beta-r\).  The remaining angular pieces are the
proper-support sectors already considered.  Hence no shifted full-support
candidate occurs.

Finally, resolve the coordinate faces in increasing order of dimension.  In
the resulting normal-crossings charts, products of boundary denominators are
indexed by nested collections of supports; for the maximal support building
set these are strict chains.  Thus a higher-order diagonal pole requires a
strict chain of incident supports.  Complete details, including the
multivariable residue functions, are given in
\cite{MatuzasGenericPoles2026}.
\end{proof}

\section{The universal second pole}
\label{sec:universal-pole}

The proof of the main theorem rests on one strict root-count inequality.

\begin{lemma}[Strict parabolic density]\label{lem:parabolic-density}
Let \(\Psi\) be irreducible of rank \(r\), with \(N\) positive roots.  If
\(\Psi_J\) is a proper standard parabolic subsystem of rank \(j>1\), with
\(N_J\) positive roots, then
\begin{equation}\label{eq:strict-density}
 \boxed{
 \frac{N_J-1}{j-1}<\frac{N-1}{r-1}.}
\end{equation}
Equivalently,
\[
 \frac{r-j}{N-N_J}<\frac{r-1}{N-1}.
\]
\end{lemma}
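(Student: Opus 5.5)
The plan is to prove the inequality by an \emph{incremental mediant argument}: add simple nodes to \(J\) one at a time and show that the ratio \((N_K-1)/(|K|-1)\) strictly increases at each step. Choose an ordering \(J=J_j\subsetneq J_{j+1}\subsetneq\cdots\subsetneq J_r=I\) with \(|J_k|=k\). Since the Dynkin diagram of \(\Psi\) is connected, we can arrange that each new node \(a_k\), with \(J_{k+1}=J_k\cup\{a_k\}\), is adjacent to some node of \(J_k\). Write
\[
 d_k=N_{J_{k+1}}-N_{J_k},
\]
which is the number of positive coroots of \(\Psi_{J_{k+1}}\) whose support contains \(a_k\). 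Since
\[
 \frac{N_{J_{k+1}}-1}{k}=\frac{(N_{J_k}-1)+d_k}{(k-1)+1},
\]
the mediant inequality shows that the ratio strictly increases from \(J_k\) to \(J_{k+1}\) if and only if
\begin{equation}\label{eq:increment-goal}
 d_k>\frac{N_{J_k}-1}{k-1}.
\end{equation}
Chaining these strict increases from \(k=j\) to \(k=r-1\) gives \eqref{eq:strict-density}. The equivalent form then follows by cross-multiplying, since both versions amount to \((N_J-1)(r-1)<(N-1)(j-1)\) after subtracting.

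The key step is \eqref{eq:increment-goal}. I would first bound \(d_k\) from below. Let \(C\) be the connected component of \(J_{k+1}\) containing \(a_k\). Then \(d_k\) equals \(N_C-N_{C\setminus\{a_k\}}\), which is at least the number of nodes of \(C\), since every connected subdiagram of \(C\) through \(a_k\) gives a root. In fact \(d_k\) is at least the number of positive roots of \(C\) with nonzero \(a_k\)-coefficient, that is, the dimension of the nilradical of the corresponding maximal parabolic in \(C\).

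I would then bound the right-hand side from above. For a disconnected \(J_k=\bigsqcup C_i\) we have \(N_{J_k}=\sum N_{C_i}\), and each irreducible piece satisfies \(N_{C_i}=|C_i|h_{C_i}/2\). This gives
\[
 \frac{N_{J_k}-1}{k-1}\le \frac{\sum_i |C_i|h_{C_i}/2-1}{k-1},
\]
which is controlled by the largest Coxeter number occurring among the components.

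The main obstacle is making \eqref{eq:increment-goal} uniform, because the crude lower bound \(d_k\ge|C|\) can fail to beat the average when \(J_k\) already contains a large piece; this happens, for example, in type \(E\) when the last node added is near the branch point. I would first try to exploit the freedom in the ordering by always growing a single connected component outward, so that \(J_k\) is as connected as possible. This makes \(d_k\) equal to a full maximal-parabolic nilradical dimension of a connected \(C\supsetneq C\setminus\{a_k\}\). That dimension grows roughly like \(|C|\,h_C/2\) minus the smaller piece, which exceeds the average \(\approx h_{J_k}/2+O(1)\).

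If a uniform argument resists, I would fall back on the classification. The irreducible types are \(A_r,B_r,C_r,D_r\) and the five exceptional types, with \(N=rh/2\). For each type the numbers \(d_k\) along a leaf-to-leaf ordering are explicit: for \(A\) they are \(d_k=k+1\); for \(B\), \(C\), \(D\) they are quadratic-free linear expressions; and for \(E_6,E_7,E_8,F_4,G_2\) they form finite lists. Inequality \eqref{eq:increment-goal} would then be checked directly in each case, with the classical series handled by an elementary linear inequality in \(k\).
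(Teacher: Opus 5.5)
\textbf{There is a genuine gap: the key increment inequality is never proved.} Your mediant reduction is sound. The ratio \((N_K-1)/(|K|-1)\) increases strictly from \(J_k\) to \(J_{k+1}\) exactly when \(d_k>(N_{J_k}-1)/(k-1)\), and chaining such steps from \(J\) to \(I\) proves the lemma. But that increment inequality is the whole content of the lemma, and the proposal does not establish it.
\begin{itemize}
\item The only rigorous lower bound you give, \(d_k\ge|C|\), is too weak. At \(E_6\subset E_7\) it gives \(d_k\ge7\), while the average is \(35/5=7\).
\item The connected-growth argument is heuristic (``grows roughly like'', ``\(\approx h_{J_k}/2+O(1)\)'').
\item The classification fallback, as described, does not cover the statement. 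Checking \(d_k\) along one fixed leaf-to-leaf ordering proves the inequality only for those \(J\) that are initial segments of that ordering. The lemma concerns every proper standard parabolic of rank \(j>1\), including disconnected ones such as \(A_1\sqcup A_1\) or \(A_2\sqcup A_2\) inside \(A_5\). Their chains to \(I\) pass through subsets no such table lists.
\end{itemize}
The paper avoids this by first bounding \(N_J\) by the largest positive-root count of any rank-\(j\) standard parabolic. It uses convexity in the classical types and a table of maximizers in the exceptional ones, noting that disconnected subdiagrams never exceed the connected maximum. It then compares an explicit function of \(j\) with its value at \(r\). Your fallback would need that reduction step.

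\textbf{Your uniform branch can be completed, and it would give a genuinely different proof with far less case analysis.}
\begin{itemize}
\item At each step, enlarge the component \(C\) of \(J_k\) with the largest Coxeter number \(h\), say of rank \(c\), by a node \(a\notin J_k\) adjacent to \(C\). Such a node exists because the diagram is connected and \(C\) is a connected component of \(J_k\).
\item Since \(N_{C_i}=c_ih_{C_i}/2\) for every component, \(N_{J_k}\le kh/2\).
\item The highest root of \(C\cup\{a\}\) dominates that of \(C\) and has positive \(a\)-coefficient. Hence \(h_{C\cup\{a\}}\ge h+1\), and \(d_k\ge N_{C\cup\{a\}}-N_C\ge\bigl((c+1)(h+1)-ch\bigr)/2=(h+c+1)/2\).
\item The increment inequality then reduces to \(h<(c+1)(k-1)+2\), with \(k\ge\max(c,2)\).
\item For \(c=1\) this is immediate, since \(h=2\). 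For \(c\ge2\) it follows from \(h\le c^2\), which holds for every irreducible type except \(G_2\). \(G_2\) never occurs as a proper parabolic.
\end{itemize}
The only type-specific input is then the list of Coxeter numbers. Supplying this estimate, or the paper's reduction to maximizers, is what turns your plan into a proof.
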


\begin{proof}
Duality only exchanges \(B_r\) and \(C_r\), so it does not affect positive
root counts.

For \(A_r\), one has \(N=r(r+1)/2\).  Convexity shows that a rank-\(j\)
standard parabolic has at most \(j(j+1)/2\) positive roots.  Hence
\[
 \frac{N_J-1}{j-1}\leq\frac{j+2}{2}<\frac{r+2}{2}
 =\frac{N-1}{r-1}.
\]
For \(B_r\) and \(C_r\), \(N=r^2\) and \(N_J\leq j^2\), giving
\[
 \frac{N_J-1}{j-1}\leq j+1<r+1=\frac{N-1}{r-1}.
\]
For \(D_r\), \(N=r(r-1)\).  At \(j=2\), the largest parabolic is \(A_2\)
and has three positive roots.  For \(j\geq3\), the largest is \(D_j\)
(with \(D_3=A_3\)), so \(N_J\leq j(j-1)\).  Thus
\[
 \frac{N_J-1}{j-1}\leq j-\frac1{j-1}
 <r-\frac1{r-1}=\frac{N-1}{r-1}.
\]
For the exceptional types, the maximal positive-root count at every proper
rank is shown below; disconnected subdiagrams never exceed the displayed
connected maximum.
\begin{center}
\begin{tabular}{@{}ccl@{}}
\toprule
ambient type & \(N\) & maximal proper standard parabolics by rank \(j\geq2\)\\
\midrule
\(G_2\)&6&none\\
\(F_4\)&24&\(B_2(4),\ B_3/C_3(9)\)\\
\(E_6\)&36&\(A_2(3),\ A_3(6),\ D_4(12),\ D_5(20)\)\\
\(E_7\)&63&\(A_2(3),\ A_3(6),\ D_4(12),\ D_5(20),\ E_6(36)\)\\
\(E_8\)&120&\(A_2(3),\ A_3(6),\ D_4(12),\ D_5(20),\ E_6(36),\ E_7(63)\)\\
\bottomrule
\end{tabular}
\end{center}
Direct substitution is strict in every row.  This completes the finite
classification proof.
\end{proof}

For the wall opposite node \(i\), put
\[
 S_i=I\setminus\{i\},
 \qquad
 \Delta_i=\left\{u_j>0:\sum_{j\ne i}u_j=1\right\},
\]
and
\begin{equation}\label{eq:Qi-def}
 Q_i(u)=\prod_{\beta\in\Psi^+\setminus\{\beta_i\}}
 \left(\sum_{j\ne i}b_j(\beta)u_j\right),
 \qquad
 \Per_i(q)=\int_{\Delta_i}Q_i(u)^{-q}\dd u.
\end{equation}

\begin{lemma}[Convergence at the second-pole exponent]
\label{lem:wall-convergence}
Let
\[
 q_2=\frac{r-1}{N-1}.
\]
Then \(\Per_i(q_2)\) is finite and positive for every simple node \(i\).
\end{lemma}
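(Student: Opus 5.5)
Positivity is immediate: on the open simplex \(\Delta_i\) every linear form \(\sum_{j\ne i}b_j(\beta)u_j\) with \(\beta\in\Psi^+\setminus\{\beta_i\}\) is strictly positive. Indeed, such a \(\beta\) is either a simple coroot \(\beta_j\) with \(j\ne i\), or it has support of size at least two, so its support meets \(S_i\). Hence the integrand is positive and continuous on \(\Delta_i\), and \(\Per_i(q_2)>0\) once finiteness is known. The whole task is therefore finiteness near the boundary of the closed simplex. This is a power-counting statement in a homogeneous, monomial-type situation.

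For finiteness I will use the standard local analysis near faces of the simplex. Fix a nonempty proper subset \(T\subsetneq S_i\), and consider the region where the coordinates \(u_j\), \(j\in T\), are small, of size \(\le t\), while the others are bounded below. A form \(\sum_{j\ne i}b_j(\beta)u_j\) becomes small there exactly when \(\operatorname{supp}(\beta)\cap S_i\subseteq T\). Such a form is then of size \(\asymp t\), being a positive combination of the small coordinates it involves.

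Let \(M(T)\) denote the number of such \(\beta\ne\beta_i\). These are the positive coroots of the parabolic \(\Psi_{T\cup\{i\}}\) that meet \(T\). Only the component of \(T\cup\{i\}\) containing relevant nodes matters, and the count is \(M(T)=N_{T\cup\{i\}}-N_{\{i\}}\), which equals \(N_{T\cup\{i\}}-1\) when \(i\) is adjacent to the relevant nodes, and at most that in general. Following the resolution of coordinate faces in increasing dimension, as in Proposition~\ref{prop:carrier-input} (blowing up along the building set of supports), convergence of \(\int Q_i^{-q}\) reduces to the strict inequalities
\[
 q\,M(T)<|T|\qquad\text{for every nonempty }T\subsetneq S_i,
\]
together with the analogous conditions along nested chains. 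In normal-crossing charts the exponent of each exceptional coordinate is \(|T|-1-qM(T)\), and we need this to exceed \(-1\). Homogeneity of \(Q_i\) of degree \(N-1\) on the projective simplex is consistent with \(q_2(N-1)=r-1=|S_i|\). This is exactly why the full set \(S_i\) is excluded: we integrate over the simplex, not radially.

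The key step is then the inequality \(q_2M(T)<|T|\), that is,
\[
 \frac{M(T)}{|T|}<\frac{N-1}{r-1}.
\]
Set \(J=T\cup\{i\}\), a proper subset of \(I\) of size \(j=|T|+1\ge2\), with \(M(T)\le N_J-1\). Lemma~\ref{lem:parabolic-density} gives \((N_J-1)/(j-1)<(N-1)/(r-1)\), which is precisely the required bound. When \(J\) is disconnected, the parabolic count is a sum over components. The lemma's statement covers arbitrary standard parabolics, and components not containing \(i\) only contribute their own roots, which makes \(M(T)\) smaller relative to \(N_J-1\). The case \(|T|=1\) with \(T=\{j\}\) also needs care: the count is \(M\le 2\) or \(3\) (for \(\beta_j\) and a rank-two parabolic), and the inequality again follows from the lemma with \(J\) of rank two.

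The main obstacle I expect is making the reduction to these face inequalities rigorous. Corners where several nested sets \(T_1\subset T_2\subset\cdots\) shrink at different rates need each link of the chain to satisfy the inequality. I would handle this by the iterated blow-up (building set of all subsets of \(S_i\)), in which each chart is a product of coordinate powers \(t_k^{|T_k|-1-qM(T_k)}\) times a smooth positive function. The per-\(T\) inequalities then suffice because the exponents decouple. Alternatively, one could invoke the general convergence criterion of \cite{MatuzasGenericPoles2026} directly.
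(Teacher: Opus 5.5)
Your proposal is correct and follows essentially the same route as the paper. Both arguments do face-by-face power counting in the iterated blow-up of the simplex, require the exponent \(|T|-1-q_2M(T)>-1\), and reduce this via \(J=T\cup\{i\}\) to Lemma~\ref{lem:parabolic-density}. One clean-up: \(\beta_i\) is the only positive coroot supported on \(\{i\}\), so \(M(T)=N_{T\cup\{i\}}-1\) holds exactly in all cases, since roots in components of \(J\) not containing \(i\) are counted too; your hedges about adjacency and disconnected \(J\) are therefore unnecessary, though harmless because only the upper bound is used.
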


\begin{proof}
Positivity is immediate because every factor of \(Q_i\) is positive in the
interior of \(\Delta_i\).  It remains to check the boundary of the simplex.
Let \(A\) be a nonempty proper subset of \(S_i\), and put
\[
 J=A\cup\{i\}.
\]
Along the relative interior of the face \(u_j=0\) for \(j\in A\), a
restricted root factor vanishes precisely when the original positive root is
supported on \(J\), except for the omitted root \(\beta_i\).  Thus exactly
\(N_J-1\) factors vanish, where \(N_J\) is the number of positive roots in
the standard parabolic subsystem on \(J\).

In the real blow-up chart for this face, write \(u_j=t v_j\) for \(j\in A\).
The simplex measure contributes \(t^{|A|-1}\,dt\), while the integrand
contributes \(t^{-q_2(N_J-1)}\).  Local integrability is therefore equivalent
to
\begin{equation}\label{eq:wall-integrability-criterion}
 q_2(N_J-1)<|A|.
\end{equation}
Since \(|J|=|A|+1>1\) and \(J\subsetneq I\), Lemma~\ref{lem:parabolic-density}
gives
\[
 \frac{N_J-1}{|A|}<\frac{N-1}{r-1}=\frac1{q_2},
\]
which is exactly \eqref{eq:wall-integrability-criterion}.  The same argument
applies to every exceptional divisor in the iterated blow-up of the simplex,
so the pullback of \(Q_i^{-q_2}\dd u\) is locally integrable at every boundary
stratum.  Hence \(\Per_i(q_2)\) converges.
\end{proof}

\begin{theorem}[Universal second pole]\label{thm:universal-second-pole}
Let \(\Phi\) be irreducible, reduced, crystallographic, and of rank
\(r\geq2\).  Then the next distinct pole of \(\xi_\Phi\) to the left of the
leading pole \(q_1=2/h\) is
\begin{equation}\label{eq:q2-main}
 \boxed{
 q_2(\Phi)=\frac{r-1}{N-1}=\frac{2(r-1)}{rh-2}.}
\end{equation}
Its only contributing supports are \(S_1,\ldots,S_r\); the pole is simple; and
\begin{equation}\label{eq:universal-residue}
 \boxed{
 \Res_{s=q_2}\xi_\Phi(s)
 =\frac{\zeta_{\mathrm R}(q_2)}{N-1}
 \sum_{i=1}^r\Per_i(q_2).}
\end{equation}
Every \(\Per_i(q_2)\) is finite and positive.  Hence the residue is strictly
negative and nonzero.  For the ordinary Witten zeta function,
\[
 \Res_{s=q_2}\zeta_\Phi(s)
 =K_\Phi^{q_2}\Res_{s=q_2}\xi_\Phi(s).
\]
\end{theorem}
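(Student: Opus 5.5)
The proof will run entirely through Proposition~\ref{prop:carrier-input}. It reduces the question to comparing the candidate values \((|S|-\ell)/N(S)\) for proper supports \(S\), together with the full-support value \(r/N\). We then check that the maximal candidate below \(2/h\) is attained only by the unshifted walls \(S_i\), and that no chain of incident supports can raise the order.

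First, order the candidates. The leading pole is \(2/h = r/N\), the full-support value, since \(N = rh/2\). Shifted candidates with \(\ell \ge 1\) satisfy \((|S|-\ell)/N(S) < |S|/N(S)\), so it suffices to bound the unshifted ones, although shifted ones could still tie with \(q_2\). For a proper \(S\) with complement \(J = I\setminus S\), we have \(N(S) = N - N_J\) and \(|S| = r - j\).
\begin{itemize}
\item If \(j = 1\), then \(N_J = 1\) and \(q(S) = (r-1)/(N-1)\).
\item If \(j > 1\), the equivalent form of Lemma~\ref{lem:parabolic-density} gives \((r-j)/(N-N_J) < (r-1)/(N-1)\). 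The lemma is stated for connected parabolics; for disconnected \(J\) I would invoke the remark in its proof that disconnected subdiagrams do not exceed the connected maximum. Alternatively, additivity gives \(N_J - 1 \le \sum_k (N_{J_k} - 1) + (\#\text{comp} - 1)\cdot(\cdots)\), which needs a small check.
\end{itemize}
We also need \(q_2 < r/N\), which is equivalent to \(N < r\) after clearing denominators. Indeed \(N(r-1) < r(N-1)\) reduces to \(N > r\), which holds for rank \(\ge 2\).

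Second, rule out shifted candidates tying with \(q_2\). If \(\ell \ge 1\), then \((|S|-\ell)/N(S) \le (|S|-1)/N(S) < |S|/N(S) \le q_2\). So every pole strictly between \(q_2\) and \(2/h\) is excluded, and at \(q_2\) the incident supports are exactly \(S_1,\ldots,S_r\), all unshifted. These supports are pairwise incomparable, each of size \(r-1\), so they contain no strict inclusion chain. By the last clause of Proposition~\ref{prop:carrier-input} the pole is at most simple. Each \(S_i\) is regular at \(q_2\): regularity means that no smaller face inside the wall is singular, and this is exactly the convergence statement of Lemma~\ref{lem:wall-convergence}, whose blow-up criterion \eqref{eq:wall-integrability-criterion} is the strict inequality.

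Third, compute the residue from \eqref{eq:carrier-residue} with \(S = S_i\). Here \(N(S_i) = N-1\) and \(\mathcal P_{S_i} = \Per_i\), since \(R(S_i) = \Psi^+\setminus\{\beta_i\}\) and the forms restrict to \(Q_i\). The complementary system \(\Psi_{\{i\}}\) is \(A_1\) with \(P(m) = m\), so \(\xi_{A_1}(q) = \zeta_{\mathrm R}(q)\). Summing over \(i\) gives \eqref{eq:universal-residue}. For the sign, note \(0 < q_2 < 1\) because \(N-1 > r-1\), so \(\zeta_{\mathrm R}(q_2) < 0\). Positivity and finiteness of each \(\Per_i(q_2)\) come from Lemma~\ref{lem:wall-convergence}. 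The residue is therefore strictly negative, which also shows the pole is genuinely present. The relation for \(\zeta_\Phi\) follows from the normalization \(\xi_\Phi = K_\Phi^{-s}\zeta_\Phi\).

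The main obstacle is the disconnected-complement case of the strict inequality, together with confirming that "regular at its unshifted value" in Proposition~\ref{prop:carrier-input} is exactly the integrability already proved. For the former I would first try superadditivity. If \(J = J_1 \sqcup J_2\), then \(N_J = N_{J_1} + N_{J_2}\), and \((N_J-1)/(j-1)\) is bounded by a mediant-type combination of the component ratios \((N_{J_k}-1)/(j_k-1)\) and \(1/1\). Each of these is at most the connected bound, and the connected bound is below \((N-1)/(r-1)\).
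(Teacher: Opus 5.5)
Your proposal is correct and is essentially the paper's proof. You reduce to maximizing $|S|/N(S)=(r-j)/(N-N_J)$ via Lemma~\ref{lem:parabolic-density}, observe that the $S_i$ are pairwise incomparable (the full support is incident only at $r/N\ne q_2$), and read off the residue from \eqref{eq:carrier-residue} using $\xi_{A_1}=\zeta_{\mathrm R}$, Lemma~\ref{lem:wall-convergence}, and $\zeta_{\mathrm R}(q_2)<0$. There are two small slips: $q_2<r/N$ is equivalent to $N>r$, not $N<r$ as you first wrote; and Lemma~\ref{lem:parabolic-density} is already stated and proved for all proper standard parabolics, disconnected ones included, so your mediant fallback, though valid, is unnecessary.
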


\begin{proof}
For fixed \(S\), every shifted candidate is strictly below the unshifted one,
so it is enough to maximize \(q(S)=|S|/N(S)\) over nonempty proper supports.
Let \(J=I\setminus S\), \(j=|J|\), and let \(N_J\) be the number of positive
roots in the standard parabolic on \(J\).  A positive root fails to meet
\(S\) precisely when it is supported on \(J\), so
\[
 N(S)=N-N_J.
\]
If \(j=1\), then \(N_J=1\) and
\[
 q(S)=\frac{r-1}{N-1}.
\]
If \(j>1\), Lemma~\ref{lem:parabolic-density} gives a strict inequality.
Thus the maximizing supports are exactly the complements of single nodes.
The full-support shifted family is absent, and
\[
 q_1-q_2=\frac{N-r}{N(N-1)}>0,
\]
so there is no collision with the leading pole.

Lemma~\ref{lem:wall-convergence} proves that every wall period
\(\Per_i(q_2)\) is finite and positive.  The only positive root supported on
the omitted node is \(\beta_i\); hence the complementary factor in
Proposition~\ref{prop:carrier-input} is
\(\xi_{A_1}(q_2)=\zeta_{\mathrm R}(q_2)\), and the derivative of the defining
linear form along the diagonal is \(N-1\).  Summing the simple-wall
contributions gives \eqref{eq:universal-residue}.

The supports \(S_i\) are pairwise incomparable.  The only support properly
containing any \(S_i\) is the full support, whose sole candidate is
\(2/h\ne q_2\).  Thus no strict incident chain occurs at \(q_2\), and the
pole is at most simple.  Finally \(0<q_2<1\), and
\[
 \zeta_{\mathrm R}(q)=\frac{\eta(q)}{1-2^{1-q}}<0
 \qquad(0<q<1),
\]
where \(\eta(q)>0\) is the alternating eta function.  All wall periods are
positive, so no cancellation is possible.  The pole is exactly simple and
its residue is negative.
\end{proof}

\begin{remark}[Rank one]
For \(A_1\), \(\xi_{A_1}(s)=\zeta_{\mathrm R}(s)\); there is only the pole at
\(s=1\), so no second pole exists.
\end{remark}

\begin{corollary}[Closed formulas for the location]\label{cor:q2-types}
The universal formula specializes to
\begingroup
\renewcommand{\arraystretch}{1.35}
\[
\begin{array}{c|c@{\qquad}c|c}
\Phi&q_2(\Phi)&\Phi&q_2(\Phi)\\ \hline
A_r&\dfrac{2}{r+2}&G_2&\dfrac15\\
B_r,C_r&\dfrac1{r+1}&F_4&\dfrac3{23}\\
D_r&\dfrac{r-1}{r(r-1)-1}&E_6&\dfrac17\\
&&E_7&\dfrac3{31}\\
&&E_8&\dfrac1{17}
\end{array}
\]
\endgroup
\end{corollary}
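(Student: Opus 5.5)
The corollary is a direct specialization of \eqref{eq:q2-main}, so the plan is simply to substitute the rank \(r\) and the positive-root count \(N\) into \(q_2=(r-1)/(N-1)\). As a consistency check, I will also verify each value against the second form \(2(r-1)/(rh-2)\), using the standard identity \(N=rh/2\) with the usual Coxeter numbers. No analytic input is needed beyond Theorem~\ref{thm:universal-second-pole}. The work is bookkeeping of root counts, the same ones already used in the proof of Lemma~\ref{lem:parabolic-density}.

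For the classical families I use \(N(A_r)=r(r+1)/2\), \(N(B_r)=N(C_r)=r^2\) and \(N(D_r)=r(r-1)\).
\begin{itemize}
\item For \(A_r\), one has \(N-1=(r^2+r-2)/2=(r-1)(r+2)/2\). The factor \(r-1\) cancels and gives \(2/(r+2)\).
\item For \(B_r\) and \(C_r\), \(N-1=r^2-1=(r-1)(r+1)\), giving \(1/(r+1)\).
\item For \(D_r\), \(N-1=r(r-1)-1\) does not factor usefully, so the entry is left as \((r-1)/(r(r-1)-1)\).
\end{itemize}
These formulas are meant for the ranges where the types are defined and irreducible of rank at least two, namely \(r\ge2\), \(r\ge2\) and \(r\ge4\). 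The \(D_r\) entry would also reproduce the \(A_3\) value at \(r=3\): there \(N=6\) and \(q_2=2/5\).

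For the exceptional types I substitute \((r,N)=(2,6),(4,24),(6,36),(7,63),(8,120)\):
\begin{itemize}
\item \(G_2\): \(1/5\);
\item \(F_4\): \(3/23\);
\item \(E_6\): \(5/35=1/7\);
\item \(E_7\): \(6/62=3/31\);
\item \(E_8\): \(7/119=1/17\).
\end{itemize}
For the cross-check I use the Coxeter numbers \(h=6,12,12,18,30\). For example, for \(E_7\) this gives \(rh-2=124\) and \(2\cdot 6/124=3/31\), in agreement with the direct value.

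The only point requiring care is reducing the fractions correctly, notably for \(E_6\), \(E_7\) and \(E_8\). There is no conceptual obstacle.
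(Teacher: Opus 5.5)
Your proposal is correct and is essentially the paper's argument. The paper gives no separate proof because the corollary is just the substitution of \((r,N)\) into \(q_2=(r-1)/(N-1)\) from Theorem~\ref{thm:universal-second-pole}, and your root counts, fraction reductions (\(5/35\), \(6/62\), \(7/119\)) and Coxeter-number cross-check via \(N=rh/2\) are all right.
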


\section{Geometric forms of the wall periods}
\label{sec:marked-walls}

The integral \(\Per_i\) is not determined by the abstract weighted central
arrangement alone.  Its marking includes the selected positive chamber,
primitive lattice covectors and their scalar factors, the affine section
\(\sum u_j=1\), and its lattice-normalized measure.

The elementary \(A_3\) case already detects this distinction.  At
\(q_2=2/5\), the endpoint and central wall polynomials can be written as
\[
 Q_E(x,y)=x^2y(x+y)^2,
 \qquad
 Q_C(x,y)=x^2y^2(x+y).
\]
The underlying unmarked weighted arrangements are linearly equivalent, but
the marked periods are
\[
 \Per_E=B\!\left(\frac15,\frac35\right),
 \qquad
 \Per_C=B\!\left(\frac15,\frac15\right),
\]
and
\[
 \frac{\Per_E}{\Per_C}
 =\frac{\sin(\pi/5)}{\sin(2\pi/5)}
 =\frac{\sqrt5-1}{2}.
\]
There are two endpoint walls and one central wall, so
\begin{equation}\label{eq:A3-aggregate}
 \boxed{
 2\Per_E+\Per_C
 =\sqrt5\,B\!\left(\frac15,\frac15\right).}
\end{equation}

We now complete a marked wall over all chambers in its Weyl orbit.  Choose a
coroot \(\alpha\), let \(H_\alpha=\alpha^\perp\), and let \(Q_\alpha\) be
the canonical restricted discriminant with its primitive lattice scalars and
induced projective measure retained.  For the coroot orbit \(\mathcal O\) of
\(\alpha\), define
\begin{equation}\label{eq:J-orbit-def}
 J_{\mathcal O}(q)=
 \sum_{D\in\operatorname{Ch}(\mathcal A^{H_\alpha})}
 \int_{\mathbb P(D)}|Q_\alpha|^{-q}\,\Omega_\alpha.
\end{equation}
By Lemma~\ref{lem:affine-projective-spherical}, all linear chambers are counted with the same lattice normalization; antipodal chambers are distinct.  Hence
\[
 J_{\mathcal O}(q)=
 \int_{S(H_\alpha)}|Q_\alpha(\theta)|^{-q}\dd\sigma(\theta).
\]

\begin{lemma}[Affine--projective--spherical normalization]
\label{lem:affine-projective-spherical}
Let \(V\) be an oriented Euclidean space of dimension \(d\), let \(L\subset V\)
be a full lattice of Euclidean covolume \(\delta_L\), let \(D\) be a chamber of
a homogeneous polynomial \(Q\) of degree \(p\), and put \(q=d/p\).  If a
linear form \(a\) is positive on \(D\) and the section
\(\Sigma_a=D\cap\{a=1\}\) is compact, then
\begin{equation}\label{eq:affine-spherical-normalization}
 \frac1{\delta_L}
 \int_{\Sigma_a}|Q(x)|^{-q}\,\iota_E(dx_1\wedge\cdots\wedge dx_d)
 =\frac1{\delta_L}
 \int_{D\cap S(V)}|Q(\theta)|^{-q}\dd\sigma(\theta).
\end{equation}
Here \(E=\sum_jx_j\partial_{x_j}\), and the restrictions of the contracted
form carry the boundary orientation induced by the radial coordinate.
\end{lemma}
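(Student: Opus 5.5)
The plan is to compare both sides with the same $d$-form on the cone $D$, namely
\[
 \omega=|Q(x)|^{-q}\,dx_1\wedge\cdots\wedge dx_d .
\]
By homogeneity, $|Q(tx)|^{-q}=t^{-pq}|Q(x)|^{-q}=t^{-d}|Q(x)|^{-q}$. Hence the contracted $(d-1)$-form
\[
 \eta=|Q(x)|^{-q}\,\iota_E(dx_1\wedge\cdots\wedge dx_d)
\]
is invariant under dilations, since $\iota_E$ of the volume form scales by $t^d$. I will also show that $\eta$ is closed on $D$. The Lie derivative is $L_E\eta=0$ because $\eta$ is homogeneous of degree $0$, and $\iota_E\eta=0$ trivially. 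Cartan's formula then gives $\iota_E\,d\eta=0$. Since $d\eta$ is a top form, this forces $d\eta=0$. Equivalently, $d\eta=\operatorname{div}(|Q|^{-q}E)\,dx$, and Euler's identity gives
\[
 \operatorname{div}(|Q|^{-q}E)=d\,|Q|^{-q}-qp\,|Q|^{-q}=0 .
\]

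Next I use radial projection. The map $\pi\colon\Sigma_a\to D\cap S(V)$, $x\mapsto x/|x|$, is a diffeomorphism: $a>0$ on $D$ and $\Sigma_a$ is compact, so every ray in $D$ meets $\{a=1\}$ exactly once. Dilation invariance of $\eta$ lets me compute $\int_{\Sigma_a}\eta$ along the rays. More precisely, I will use Stokes on the region between $\Sigma_a$ and the spherical cap, cut off by the cone. The boundary of that region consists of $\Sigma_a$, the spherical cap, and pieces of $\partial D$ swept by rays. Because $E$ is tangent to those lateral pieces, $\iota_E$ of the volume form restricts to zero there. Closedness of $\eta$ then gives
\[
 \int_{\Sigma_a}\eta=\int_{D\cap S(V)}\eta ,
\]
with the orientations induced by the radial coordinate, as stipulated in the statement.

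On the unit sphere, $\iota_E(dx_1\wedge\cdots\wedge dx_d)$ restricts to the Riemannian area form $d\sigma$, because $E$ is the unit outward normal there. Dividing both sides by the constant $\delta_L$ gives \eqref{eq:affine-spherical-normalization}.

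The main obstacle is analytic: $|Q|^{-q}$ may blow up at $\partial D$, so Stokes cannot be applied naively. The cleanest route avoids Stokes altogether. I will pull both integrals back to $\Sigma_a$ via $\theta=x/|x|$ and check pointwise that $\pi^*(|Q|^{-q}\,d\sigma)=\eta|_{\Sigma_a}$, using homogeneity of degree $0$ of $\eta$. This is the identity $\pi^*(\iota_E\,\mathrm{vol}|_{S})=|x|^{-d}\,\iota_E\,\mathrm{vol}$, combined with $|Q(\theta)|^{-q}=|x|^{d}|Q(x)|^{-q}$. Both integrands are nonnegative, so the change of variables holds in $[0,\infty]$ by Tonelli. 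Convergence is therefore not needed, and the identity holds whether both sides are finite or both infinite.
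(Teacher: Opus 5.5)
Your proposal is correct and is essentially the paper's proof. Both rest on the dilation invariance of $|Q|^{-q}\iota_E(dx_1\wedge\cdots\wedge dx_d)$ and the radial bijection between $\Sigma_a$ and $D\cap S(V)$, with $\delta_L^{-1}$ carried along as a constant; the closedness/Stokes detour is unnecessary, and your pullback identity $\pi^*(\iota_E\,\mathrm{vol})=|x|^{-d}\iota_E\,\mathrm{vol}$ with the Tonelli remark just spells out the paper's one-line radial identification in more detail.
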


\begin{proof}
The \((d-1)\)-form
\[
 \omega_Q=|Q|^{-d/p}\iota_E(dx_1\wedge\cdots\wedge dx_d)
\]
is invariant under every positive radial dilation.  Each ray in \(D\) meets
both \(\Sigma_a\) and \(D\cap S(V)\) exactly once.  The radial identification
therefore pulls the restriction of \(\omega_Q\) on one section to its
restriction on the other.  Replacing Euclidean volume by the lattice-normalized
volume form multiplies both sides by \(\delta_L^{-1}\).
\end{proof}

\begin{proposition}[Spherical forms of the leading and wall periods]
\label{prop:inverse-jacobian-moments}
Let \(V_\Phi=\operatorname{span}_{\R}\Psi\) carry the invariant Euclidean
metric, and define the simple-coroot evaluation map
\[
 A_\Phi:V_\Phi\longrightarrow\R^r,
 \qquad
 A_\Phi(x)=\bigl((x,\beta_1),\ldots,(x,\beta_r)\bigr),
 \qquad
 \delta_\Phi=|\det A_\Phi|.
\]
If \(G_\Psi=((\beta_j,\beta_k))_{j,k}\) is the simple-coroot Gram
matrix, then
\begin{equation}\label{eq:bulk-coordinate-determinant}
 \boxed{\delta_\Phi^2=\det G_\Psi.}
\end{equation}
Put
\[
 \mathcal S_\Phi
 =\{\theta\in S(V_\Phi):(\theta,\beta_j)>0\ \text{for all }j\},
\]
\[
 J_\Phi(x)=\prod_{\beta\in\Psi^+}(x,\beta),
 \qquad
 D_\Phi(x)=J_\Phi(x)^2.
\]
Then the normalized leading residue has the spherical moment form
\begin{equation}\label{eq:bulk-inverse-jacobian}
 \boxed{
 h\Res_{s=2/h}\xi_\Phi(s)
 =\frac{2\delta_\Phi}{r}
 \int_{\mathcal S_\Phi}D_\Phi(\theta)^{-1/h}\dd\sigma(\theta).}
\end{equation}

For a simple wall, let
\[
 H_i=\beta_i^\perp,
 \qquad
 A_i:H_i\longrightarrow\R^{r-1},
 \qquad
 A_i(x)=\bigl((x,\beta_j)\bigr)_{j\ne i},
 \qquad
 \delta_i=|\det A_i|,
\]
where the determinant uses induced Euclidean measure on \(H_i\) and standard
coordinate measure on \(\R^{r-1}\).  If \(\pi_i\) denotes orthogonal
projection onto \(H_i\), then
\begin{equation}\label{eq:wall-coordinate-determinant}
 \boxed{
 \delta_i^2
 =\det\bigl((\pi_i\beta_j,\pi_i\beta_k)\bigr)_{j,k\ne i}.}
\end{equation}
Define the relative spherical chamber
and restricted Weyl Jacobian by
\[
 \mathcal S_i
 =\{\theta\in S(H_i):(\theta,\beta_j)>0\ \text{for }j\ne i\},
\]
\[
 J_i(x)=\prod_{\beta\in\Psi^+\setminus\{\beta_i\}}(x,\beta),
 \qquad
 D_i(x)=J_i(x)^2.
\]
At the second-pole exponent \(q_2=(r-1)/(N-1)\),
\begin{equation}\label{eq:wall-inverse-jacobian}
 \boxed{
 \Per_i(q_2)
 =\delta_i
 \int_{\mathcal S_i}D_i(\theta)^{-q_2/2}\dd\sigma(\theta).}
\end{equation}
Consequently, Theorem~\ref{thm:universal-second-pole} is equivalently
\begin{equation}\label{eq:second-residue-inverse-jacobian}
 \boxed{
 \Res_{s=q_2}\xi_\Phi(s)
 =\frac{\zeta_{\mathrm R}(q_2)}{N-1}
 \sum_{i=1}^r\delta_i
 \int_{\mathcal S_i}D_i(\theta)^{-q_2/2}\dd\sigma(\theta).}
\end{equation}
\end{proposition}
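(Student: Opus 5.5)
Both determinant identities are linear algebra, and the two spherical formulas follow from Lemma~\ref{lem:affine-projective-spherical} once the relevant integrals are written in coordinates. For \eqref{eq:bulk-coordinate-determinant}, note that \(A_\Phi\) is the composite of the identification \(V_\Phi\cong\R^r\) (via an orthonormal basis) with the matrix \(B\) whose rows are the \(\beta_j\). Hence \(\det(A_\Phi)^2=\det(BB^{T})=\det G_\Psi\). The same argument applied on \(H_i\), with orthonormal basis of \(H_i\) and rows \(\pi_i\beta_j\) (since \((x,\beta_j)=(x,\pi_i\beta_j)\) for \(x\in H_i\)), gives \eqref{eq:wall-coordinate-determinant}.

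For \eqref{eq:wall-inverse-jacobian}, I would change variables by \(A_i\). This maps the open cone \(\{x\in H_i:(x,\beta_j)>0,\ j\ne i\}\) onto the positive orthant of \(\R^{r-1}\) with coordinates \(u_j=(x,\beta_j)\). For \(x\in H_i\) and \(\beta\in\Psi^+\setminus\{\beta_i\}\) we have \((x,\beta)=\sum_{j\ne i}b_j(\beta)u_j\), because the \(b_i\) term is killed. Thus \(J_i(x)=Q_i(u)\) and \(D_i^{-q_2/2}=Q_i^{-q_2}\) on the chamber. Next, the degree of \(Q_i\) is \(p=N-1\) and \(d=r-1\), so \(q_2=d/p\) is exactly the critical exponent of Lemma~\ref{lem:affine-projective-spherical}. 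I would apply that lemma in \(u\)-coordinates with \(a=\sum u_j\). There \(\iota_E(du)\) restricted to \(\{a=1\}\) is the affine lattice measure obtained by eliminating one coordinate (up to orientation, which only affects sign conventions and is fixed by positivity). The projective form \(Q_i^{-q_2}\iota_E(du)\) is degree-zero and coordinate-independent up to the Jacobian of a linear map, so pulling it back by \(A_i\) multiplies it by \(\delta_i=|\det A_i|\). Applying the lemma in \(x\)-coordinates with the Euclidean sphere \(S(H_i)\) then gives \(\Per_i(q_2)=\delta_i\int_{\mathcal S_i}D_i^{-q_2/2}\dd\sigma\). Convergence is supplied by Lemma~\ref{lem:wall-convergence}, so both sides are finite. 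Substituting into \eqref{eq:universal-residue} gives \eqref{eq:second-residue-inverse-jacobian}.

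For the leading formula \eqref{eq:bulk-inverse-jacobian}, the same change of variables with \(A_\Phi\) turns \(\int_{\mathcal S_\Phi}D_\Phi^{-1/h}\dd\sigma\) into \(\delta_\Phi^{-1}\) times the full-simplex integral \(\int_{\Delta_I}P_\Phi(u)^{-2/h}\dd u\). Here the exponent is critical because \(2/h=r/N\). What remains is to identify the leading residue. Proposition~\ref{prop:carrier-input} says the full support contributes only at \(r/N\), through a radial denominator \(\sum s_\beta-r\). The standard comparison of the lattice sum with the integral \(\int_{\R_{>0}^r}P_\Phi(m)^{-s}\dd m\), written in polar form \(\int_0^\infty t^{r-1-Ns}\dd t\cdot\int_{\Delta}\), gives \(\Res_{s=r/N}\xi_\Phi=\frac1N\int_{\Delta_I}P_\Phi^{-r/N}\dd u\). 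Since \(N=rh/2\), this yields \(h\Res=\frac{2}{r}\int_{\Delta_I}P_\Phi^{-2/h}\dd u\), and multiplying through by the Jacobian factor gives the stated \(2\delta_\Phi/r\).

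The main obstacle is bookkeeping of normalizations. Three points need care: the \(\delta\) factor must appear on the correct side (it multiplies the spherical integral, not divides it); the affine measure "eliminating one coordinate" must agree with \(\iota_E(du)|_{\{\sum u=1\}}\); and the leading residue must be taken with the bulk factor \(1/N\) rather than any extra factor from the simplex measure. I would check each of these first on \(A_2\), against the known Romik--Au normalization, before writing the general argument.
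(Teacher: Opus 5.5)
Your argument is correct and is essentially the paper's proof: a linear change of variables by \(A_\Phi\) or \(A_i\), followed by a radial (homogeneity) identification with the unit sphere. The paper phrases this through the sublevel volumes \(\Per_i(q_2)=(r-1)\operatorname{vol}\{u_j>0:Q_i(u)\le1\}\) and \(\operatorname{vol}(\Omega_\Phi)\) in polar coordinates, whereas you pull back the contracted form and invoke Lemma~\ref{lem:affine-projective-spherical}; the two are equivalent, and you additionally verify the two Gram-determinant identities that the paper leaves implicit. For the leading residue, the input you call the standard comparison (with the radial integral taken over \(t\ge1\), giving \(1/(Ns-r)\), rather than over \((0,\infty)\)) is the leading-pole comparison theorem the paper cites, in the equivalent form \(\frac1N\int_{\Delta_I}P_\Phi^{-r/N}\dd u=\frac2h\operatorname{vol}(\Omega_\Phi)\); Proposition~\ref{prop:carrier-input} alone only locates that pole and does not supply its residue.
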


\begin{proof}
Under \(u=A_\Phi x\), one has
\[
 P_\Phi(u)=J_\Phi(x),
 \qquad
 \dd u=\delta_\Phi\dd x.
\]
Let
\(
 \Omega_\Phi=\{u\in(0,\infty)^r:P_\Phi(u)\le1\}.
\)
The leading-pole comparison theorem of \cite{MatuzasLeading2026} gives
\[
 \Res_{s=2/h}\xi_\Phi(s)=\frac2h\operatorname{vol}(\Omega_\Phi).
\]
Since \(J_\Phi\) is homogeneous of degree \(N=rh/2\), Euclidean polar
coordinates yield
\[
 \operatorname{vol}(\Omega_\Phi)
 =\frac{\delta_\Phi}{r}
 \int_{\mathcal S_\Phi}J_\Phi(\theta)^{-r/N}\dd\sigma(\theta)
 =\frac{\delta_\Phi}{r}
 \int_{\mathcal S_\Phi}D_\Phi(\theta)^{-1/h}\dd\sigma(\theta).
\]
Multiplication by \(h\) proves \eqref{eq:bulk-inverse-jacobian}.

For the wall, put \(d=r-1\) and \(p=N-1\).  The polynomial \(Q_i\) is
homogeneous of degree \(p\) in \(d\) variables, and \(q_2=d/p\).  The cone
parameterization \(u=t v\), \(v\in\Delta_i\), gives
\[
 \Per_i(q_2)
 =d\operatorname{vol}
 \{u_j>0\ (j\ne i):Q_i(u)\le1\}.
\]
Moreover,
\[
 Q_i(A_i x)=J_i(x),
 \qquad
 \dd u=\delta_i\dd x.
\]
A second polar-coordinate calculation in \(H_i\) therefore gives
\[
 \operatorname{vol}\{Q_i\le1\}
 =\frac{\delta_i}{d}
 \int_{\mathcal S_i}J_i(\theta)^{-d/p}\dd\sigma(\theta),
\]
which is \eqref{eq:wall-inverse-jacobian}.  Substitution into
\eqref{eq:universal-residue} proves
\eqref{eq:second-residue-inverse-jacobian}.
\end{proof}

\begin{remark}[Normalization factors]
The factors \(\delta_\Phi\) and \(\delta_i\) are the precise bridges between
simple-coroot affine coordinates and Euclidean Cartan coordinates.  They are
not optional normalizing constants: changing the compatible root--coroot
normalization changes the Jacobian product and the determinant in compensating
ways.  The formulas use
the root-scale Weyl Jacobian.  A complete Riemannian orbit-volume density may
carry an additional constant reference-orbit volume.
\end{remark}

\begin{remark}[Rank-two consistency check]
Writing \(R_\Phi=\Res_{s=2/h}\xi_\Phi(s)\), the bulk formula gives
\[
 3R_{A_2}=B\!\left(\frac13,\frac13\right),
 \qquad
 4R_{B_2}=\frac{\Gamma(1/4)^2}{2\sqrt{2\pi}},
 \qquad
 6R_{G_2}=\frac{\Gamma(1/3)^3}{2^{5/3}\sqrt3\,\pi}.
\]
These are exactly the rank-two leading residues in
\cite{Au2024,MatuzasLeading2026}; in the present formulation they also audit
the simple-coroot determinant factors.
\end{remark}

\begin{theorem}[Orbitwise wall tiling]\label{thm:orbit-tiling}
Let
\(W_\alpha=\{w\in W:w\alpha=\alpha\}\).  Then
\begin{equation}\label{eq:orbit-tiling}
 \boxed{
 J_{\mathcal O}(q)
 =|W_\alpha|
 \sum_{i:\,\beta_i\in\mathcal O}\Per_i(q).}
\end{equation}
Consequently,
\[
 \sum_{i=1}^r\Per_i(q)
 =\sum_{\mathcal O}
 \frac{J_{\mathcal O}(q)}{|W_{\alpha_{\mathcal O}}|}.
\]
\end{theorem}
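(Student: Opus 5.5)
We will prove \eqref{eq:orbit-tiling} by decomposing the sphere \(S(H_\alpha)\) into the chambers of the restricted arrangement \(\mathcal A^{H_\alpha}\), and then matching each chamber, through the Weyl group, to a simple wall chamber \(\mathcal S_i\).

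\emph{Step 1: the integrand.} Write \(\alpha=w\beta_i\) with \(w\in W\) and \(\beta_i\in\mathcal O\). Because \(W\) permutes \(\Psi\) up to sign, the restriction of \(\prod_{\beta\in\Psi^+,\beta\ne\pm\alpha}(x,\beta)\) to \(H_\alpha\) is, up to sign, the \(w\)-transport of \(J_i\) on \(H_i=w^{-1}H_\alpha\). Hence \(|Q_\alpha|\circ w=|J_i|\), once the lattice scalars and the measure \(\Omega_\alpha\) are taken to be the canonical \(W\)-equivariant ones. By Lemma~\ref{lem:affine-projective-spherical} and \eqref{eq:wall-inverse-jacobian}, the orbit integral \(J_{\mathcal O}\) is then a spherical integral whose normalization factor \(\delta\) is \(W\)-invariant and equals \(\delta_i\). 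We fix this normalization once, and check that \(\delta_i\) depends only on the orbit of \(\beta_i\), by comparing \eqref{eq:wall-coordinate-determinant} under \(w\).

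\emph{Step 2: the chambers.} A chamber of \(\mathcal A^{H_\alpha}\) is a connected component of \(H_\alpha\) minus the hyperplanes \(H_\beta\cap H_\alpha\). Its relative interior consists of points lying on exactly one reflecting hyperplane, namely \(H_\alpha\). For generic \(x\) in such a chamber, the point \(x\) lies in the closure of exactly two Weyl chambers, \(C\) and \(s_\alpha C\). Write \(C=vC_0\), where \(C_0\) is the fundamental chamber. Then \(x\) lies on a codimension-one face of \(vC_0\), so \(v^{-1}x\) lies on the wall \(\beta_j^\perp\) of \(C_0\) for a unique \(j\), with \(v\beta_j=\pm\alpha\), and hence \(\beta_j\in\mathcal O\). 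Consequently \(v^{-1}\) maps the chamber of \(\mathcal A^{H_\alpha}\) onto \(\mathcal S_j\); the image is exactly \(\mathcal S_j\) because both are chambers of the same induced arrangement and they share an interior point.

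This gives a map from pairs \((v,j)\) with \(v\beta_j=\pm\alpha\) onto the chambers of \(\mathcal A^{H_\alpha}\). It remains to count its fibres. For a fixed chamber \(D\), the elements \(v\) that work are exactly those with \(vC_0\in\{C,s_\alpha C\}\), so there are exactly two of them, and each determines \(j\).

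\emph{Step 3: counting.} For fixed \(j\) with \(\beta_j\in\mathcal O\), the set \(\{v: v\beta_j=\pm\alpha\}\) is a union of two cosets of \(W_{\beta_j}\), since \(s_\alpha\) sends \(\alpha\) to \(-\alpha\). It therefore has \(2|W_\alpha|\) elements. Summing over \(j\), the number of pairs \((v,j)\) equals \(2\cdot(\#\text{chambers of }\mathcal A^{H_\alpha})\). Now group the chambers according to their type \(j\). Each chamber of type \(j\) carries exactly the two pairs \((v,j)\) and \((s_\alpha v,j)\). Hence the number of chambers of type \(j\) equals \(|W_\alpha|\), and \(W_\alpha\) acts simply transitively on them.

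Summing the integrals, and using Step 1 to see that each chamber of type \(j\) contributes \(\Per_j(q)\), we obtain \eqref{eq:orbit-tiling}. The consequence follows by summing over orbits. The orbit of \(\alpha\) is the same as the orbit of \(-\alpha\), and \(|W_\alpha|\) is constant along the orbit.

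The main obstacle is Step 1, namely the normalization. We must check that the lattice-normalized measure \(\Omega_\alpha\), built from the primitive lattice scalars on \(H_\alpha\), is transported by \(w\) to the measure giving \(\delta_i\) in Proposition~\ref{prop:inverse-jacobian-moments}. We also need \(|W_\alpha|\), rather than the stabilizer of \(\pm\alpha\), to be the right divisor. We would first try to argue that \(w\) preserves the coroot lattice and its intersection with \(H\), so that both sides are lattice-normalized in the same way. The mismatch between \(W_\alpha\) and the stabilizer of \(\pm\alpha\) is already absorbed by the \(s_\alpha\) pairing in Step 3.
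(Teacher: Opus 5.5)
Your proposal is correct and is essentially the paper's argument: you count the \(2|W_\alpha|\) elements with \(w\beta_i=\pm\alpha\), and you use that each restricted chamber is the common facet of exactly the two Weyl chambers \(C\) and \(s_\alpha C\), which carry the same facet label, to get \(|W_\alpha|\) chambers of each type; you then transport the integrand and the lattice-normalized measure by \(W\). One small precision for Step~1: the lattice producing \(\delta_i\) is \(A_i^{-1}(\mathbb Z^{r-1})=\bigoplus_{j\ne i}\mathbb Z\omega_j\), the weight lattice meeting \(H_i\), not the coroot lattice meeting \(H_i\); \(W\) preserves it just as well, and, as in the paper, routing the comparison through Lemma~\ref{lem:affine-projective-spherical} and \eqref{eq:wall-inverse-jacobian} makes it a statement at \(q=q_2\).
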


\begin{proof}
Weyl chambers are \(wC_0\), with \(W\) acting simply transitively.  The
\(i\)-th facet of \(wC_0\) lies in \(H_\alpha\) exactly when
\(w\beta_i=\pm\alpha\).  There are \(2|W_\alpha|\) such elements.  Every
restricted chamber is adjacent to exactly two ambient chambers, one on each
side of \(H_\alpha\), and reflection in \(H_\alpha\) preserves the facet
label.  Thus exactly \(|W_\alpha|\) restricted chambers have marked type
\(i\).  The Weyl action preserves the primitive lattice, the absolute
restricted discriminant, and the induced projective measure, so each of
these chambers contributes \(\Per_i(q)\).  Summing first over the labels in
one orbit and then over all orbits proves the result.
\end{proof}

\section{A Stokes relation for projective chamber periods}
\label{sec:chamber-kernel}

For a real arrangement, the Varchenko matrix records the product of assigned
hyperplane weights separating two chambers.  Its determinant and degeneration
locus are classical \cite{Varchenko1993}; projective contravariant forms at
zero total additive weight are developed in
\cite[Theorems~2.2--2.5 and Corollaries~5.5, 5.7]{FalkVarchenko2012}, and
arrangement-period determinant formulas appear in
\cite{DouaiTerao1997,MarkovTarasovVarchenko1997}.  The following theorem identifies a particular null vector at the critical parameter.

\begin{lemma}[Integrability on the projective sphere]
\label{lem:weighted-projective-integrability}
Let \(\mathcal A\) be a finite central real hyperplane arrangement in
\(\R^n\), with distinct hyperplanes \(H=\ker\ell_H\) and weights
\(\lambda_H>0\).  For a nonzero intersection flat \(X\), put
\[
 \lambda(X)=\sum_{H\supset X}\lambda_H.
\]
If
\[
 \lambda(X)<\operatorname{codim}X
 \qquad\text{for every nonzero intersection flat }X,
\]
then
\[
 w(x)=\prod_{H\in\mathcal A}|\ell_H(x)|^{-\lambda_H}
 \in L^1(S^{n-1}).
\]
\end{lemma}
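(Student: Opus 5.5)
We prove integrability by induction on \(n\), using a partition of unity on \(S^{n-1}\) together with local homogeneity of \(w\) near each flat; this is the standard power-counting argument behind Lemma~\ref{lem:wall-convergence}.

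Since \(S^{n-1}\) is compact, it suffices to show that every point \(\theta\in S^{n-1}\) has a neighborhood on which \(w\) is integrable. Fix \(\theta\). Let \(\mathcal A_\theta=\{H\in\mathcal A:\theta\in H\}\) and \(X_\theta=\bigcap_{H\in\mathcal A_\theta}H\), with \(X_\theta=\R^n\) if \(\mathcal A_\theta\) is empty. Hyperplanes not containing \(\theta\) give factors \(|\ell_H|^{-\lambda_H}\) that are bounded near \(\theta\), so only the localized arrangement \(\mathcal A_\theta\) matters. Its flats are exactly the flats \(X\) of \(\mathcal A\) that contain \(X_\theta\) (hence \(\theta\)), and the hypothesis holds for each of them.

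Near \(\theta\), the sphere is locally a product of a neighborhood in \(X_\theta\cap S^{n-1}\) and a transversal slice \(T\cong\R^{c}\), where \(c=\operatorname{codim}X_\theta\). The forms \(\ell_H\) for \(H\in\mathcal A_\theta\) factor through the projection \(V\to V/X_\theta\cong\R^c\). So local integrability of \(w\) near \(\theta\) reduces to local integrability of
\[
 w_\theta(y)=\prod_{H\in\mathcal A_\theta}|\ell_H(y)|^{-\lambda_H}
\]
near \(y=0\) in \(\R^c\). This is an essential central arrangement in \(\R^c\), and \(w_\theta\) is homogeneous of degree \(-\lambda(X_\theta)\).

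In polar coordinates \(y=t\eta\) with \(\eta\in S^{c-1}\),
\[
 \int_{|y|<1}w_\theta\,dy=\int_0^1 t^{c-1-\lambda(X_\theta)}\,dt\cdot\int_{S^{c-1}}w_\theta(\eta)\,d\sigma(\eta).
\]
The radial integral is finite because \(\lambda(X_\theta)<c\); this is the hypothesis applied to the flat \(X_\theta\) itself.

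The angular factor is the same lemma for the arrangement \(\mathcal A_\theta\) on \(\R^c\). Its nonzero flats are proper flats \(Y\supsetneq X_\theta\) of \(\mathcal A\), with the same weights and codimension measured in \(V/X_\theta\), which equals \(\operatorname{codim}_V Y\). So the hypothesis is inherited, and induction on dimension applies. Two cases remain:
\begin{itemize}
\item The induction needs \(c<n\). If \(c=n\), the flat \(X_\theta\) would be \(0\); but it contains \(\theta\ne0\), so \(c\le n-1\).
\item If \(\mathcal A_\theta\) is empty, \(w\) is continuous near \(\theta\) and there is nothing to prove.
\end{itemize}
The base case \(n=1\) is trivial, since \(S^0\) is two points and no hyperplane contains a nonzero point.

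The main technical point is to justify the local product splitting carefully, so that the Euclidean sphere measure near \(\theta\) is comparable to the product measure. This holds because \(\theta\notin X_\theta^{\perp}\) directions: one uses coordinates \((x',y)\) with \(x'\in X_\theta\) near \(\theta\) and \(y\in X_\theta^\perp\) small. The radial projection \(x\mapsto x/|x|\) has bounded Jacobian there, so integrability on a neighborhood in the sphere is equivalent to integrability of \(w_\theta(y)\) on \(\{|x'-\theta|<\epsilon,\ |y|<\epsilon\}\). By Fubini this reduces to the \(c\)-dimensional estimate above.
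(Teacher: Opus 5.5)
Your proof is correct, but it takes a different route from the paper's.

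\textbf{The paper's route.} The paper argues globally. It passes to the real wonderful model of the projectivized arrangement, i.e.\ iterated blow-ups of the projective intersection strata. Along the exceptional divisor $E_X$, the spherical measure vanishes to order $\operatorname{codim}X-1$, and exactly the forms $\ell_H$ with $H\supset X$ vanish. So in each normal-crossings chart the pulled-back density is a bounded function times $\prod_{X\in\mathcal N}|t_X|^{\operatorname{codim}X-1-\lambda(X)}$ over a nested set $\mathcal N$. Properness then concludes.

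\textbf{Your route.} You localize at $\theta\in S^{n-1}$, split off the flat $X_\theta\ni\theta$, and use homogeneity in the transversal $\R^c$. The inequality at $X_\theta$ makes the radial integral converge. The inequalities at flats strictly containing $X_\theta$ are precisely the hypotheses of the same lemma for the quotient arrangement in $V/X_\theta$. That quotient has smaller dimension, since $X_\theta$ contains $\theta\ne0$.

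\textbf{Comparison.} Your argument is elementary and self-contained: compactness, Fubini, polar coordinates and induction. It needs no facts about the wonderful model, and it makes visible that each flat's inequality is used exactly once. Localizing at a generic point of $X\cap S^{n-1}$ also shows the conditions are necessary. The paper's version gives a simultaneous description of the density at all boundary corners, with the exponents $\operatorname{codim}X-1-\lambda(X)$ displayed. This matches the blow-up bookkeeping it uses in Lemma~\ref{lem:wall-convergence} and for pole orders.

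\textbf{One step to write more cleanly.} The local product reduction should use homogeneity rather than a Jacobian bound for $x\mapsto x/|x|$. Put $\Lambda=\sum_H\lambda_H$, let $U$ be a small cap around $\theta$, and let $K$ be the thin cone over $U$. Then homogeneity gives $\int_K w\,dx=\int_{1-\delta}^{1+\delta}r^{n-1-\Lambda}\,dr\cdot\int_U w\,d\sigma$. The cone $K$ lies in a box $\{|x'-\theta|<\eps,\ |y|<\eps\}$ on which the factors with $H\notin\mathcal A_\theta$ are bounded, so Fubini finishes.
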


\begin{proof}
Use the oriented real form of the wonderful model of the projectivized
arrangement \cite{DeConciniProcesi1995}.  Equivalently, successively blow up
the projective intersection strata in increasing order of dimension.  The
resulting proper map \(\pi:\widetilde S\to S^{n-1}\) has normal-crossings
boundary.  If \(E_X\) is the boundary hypersurface created by a nonzero flat
\(X\), then the pullback of spherical measure vanishes to order
\(\operatorname{codim}X-1\) along \(E_X\), while precisely the linear forms
belonging to hyperplanes containing \(X\) vanish there.  Consequently, in a
local normal-crossings chart,
\[
 \pi^*\bigl(w\,d\sigma\bigr)
 =b(t,y)\prod_{X\in\mathcal N}
 |t_X|^{\operatorname{codim}X-1-\lambda(X)}\,dt\,dy,
\]
where \(\mathcal N\) is a nested collection of flats and \(b\) is bounded
and smooth.  Every displayed exponent is greater than \(-1\) by hypothesis.
Thus the pullback is locally integrable at every boundary corner, and
properness of \(\pi\) proves the claim.
\end{proof}

\begin{theorem}[Stokes relation for chamber periods]
\label{thm:shifted-sphere-kernel}
Let
\(
 \mathcal A=\{H_j=\ker\ell_j\}_{j=1}^{m}
\)
be an essential central real arrangement in \(\R^n\).  Let
\(0<\lambda_j<1\) satisfy
\begin{equation}\label{eq:log-CY-balance}
 \sum_{j=1}^{m}\lambda_j=n,
\end{equation}
and suppose that every nonzero proper intersection flat \(X\) satisfies
\begin{equation}\label{eq:flat-integrability}
 \sum_{H_j\supset X}\lambda_j<\operatorname{codim}X.
\end{equation}
For a chamber \(D\), put
\begin{equation}\label{eq:projective-chamber-period}
 P_D=\int_{D\cap S^{n-1}}
 \prod_{j=1}^{m}|\ell_j(x)|^{-\lambda_j}\,
 \iota_E(dx_1\wedge\cdots\wedge dx_n),
 \qquad E=\sum_{k=1}^{n}x_k\partial_{x_k}.
\end{equation}
Let \(a_j=e^{\pi i\lambda_j}\), and define
\begin{equation}\label{eq:Varchenko-matrix}
 V(a)_{C,D}=\prod_{H_j\in\operatorname{Sep}(C,D)}a_j.
\end{equation}
Then every \(P_D\) is finite and positive, and the chamber-period vector
\(P=(P_D)_D\) satisfies
\begin{equation}\label{eq:Varchenko-kernel}
 \boxed{V(a)P=0.}
\end{equation}
\end{theorem}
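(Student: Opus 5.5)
The plan is a Stokes argument on a sphere translated into the complex domain, exactly as announced in the introduction. First, finiteness and positivity of each \(P_D\). The hypotheses \eqref{eq:flat-integrability} (for proper flats) together with \(\lambda_j<1\) (the hyperplane flats themselves) are precisely the hypotheses of Lemma~\ref{lem:weighted-projective-integrability}. Hence \(w(x)=\prod_j|\ell_j(x)|^{-\lambda_j}\in L^1(S^{n-1})\). On \(D\cap S^{n-1}\), with the outward radial orientation, the restriction of \(\iota_E(dx_1\wedge\cdots\wedge dx_n)\) is the positive spherical measure \(d\sigma\), so \(0<P_D<\infty\).

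Next, fix a chamber \(C\) and set \(s_j=\operatorname{sign}\ell_j|_C\). Pick \(v\in C\), so that \(s_j\ell_j(v)>0\) for all \(j\). On the tube \(T=\{z=x+iy:\ s_j\ell_j(y)>0\ \forall j\}\), each \(s_j\ell_j(z)\) lies in the open upper half-plane. There we take the principal branch of \((s_j\ell_j(z))^{-\lambda_j}\) and put
\[
 \omega=\prod_j (s_j\ell_j(z))^{-\lambda_j}\,\iota_{E_z}(dz_1\wedge\cdots\wedge dz_n),
 \qquad E_z=\sum_k z_k\partial_{z_k}.
\]
The coefficient \(f\) is holomorphic and homogeneous of degree \(-\sum\lambda_j=-n\) by \eqref{eq:log-CY-balance}. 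Euler's identity then gives \(d\omega=(n+\deg f)\,f\,dz=0\), so \(\omega\) is a closed holomorphic \((n-1)\)-form on \(T\). For \(\varepsilon>0\), the translated ball \(B_\varepsilon=\{x+i\varepsilon v:|x|\le1\}\) lies in \(T\), because its imaginary part is \(\varepsilon v\in C\). Stokes' theorem therefore gives \(\int_{\partial B_\varepsilon}\omega=0\), where \(\partial B_\varepsilon\) is the translated sphere \(x\mapsto x+i\varepsilon v\), \(x\in S^{n-1}\).

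Then let \(\varepsilon\to0\). Pulling back along \(z=x+i\varepsilon v\), we have \(dz=dx\) and
\[
 \iota_{E_z}dz=\iota_{E}dx+i\varepsilon\,\iota_v dx .
\]
Pointwise, \(s_j\ell_j(x+i\varepsilon v)\) tends to \(|\ell_j(x)|\) when \(x\) is on the \(C\)-side of \(H_j\), and to \(|\ell_j(x)|e^{i\pi}\) otherwise. The latter limit is taken from the upper half-plane, so the branch gives the factor \(e^{-i\pi\lambda_j}=\bar a_j\). The domination is uniform, since
\[
 |\ell_j(x+i\varepsilon v)|\ge|\ell_j(x)|,
\]
which bounds the integrand by \(w(x)\) times a bounded form. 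Dominated convergence then kills the \(\varepsilon\,\iota_v\) term and yields
\[
 0=\sum_D\Bigl(\prod_{H_j\in\operatorname{Sep}(C,D)}\bar a_j\Bigr)P_D=\overline{(V(a)P)_C},
\]
using that each \(P_D\) is real. Since \(C\) was arbitrary, this proves \eqref{eq:Varchenko-kernel}.

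The main obstacle is bookkeeping rather than analysis. One must make sure the branch chosen on the tube is globally single-valued; this holds because the tube is convex and each factor stays in the upper half-plane. One must also check that the boundary phase on every chamber is exactly the product over separating hyperplanes, with the correct sign, i.e. \(e^{-i\pi\lambda_j}\) rather than \(e^{+i\pi\lambda_j}\). Finally, the orientation of \(\partial B_\varepsilon\) must match the orientation used in \eqref{eq:projective-chamber-period}. The sign conventions matter only up to complex conjugation, which is harmless because \(P\) is real, but I would verify them on the rank-one example \(n=1\), two chambers, \(\lambda=1\) degenerate, or the \(A_2\) case as a sanity check.
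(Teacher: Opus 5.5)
Your proof is correct and is essentially the paper's argument: a closed, homogeneous holomorphic form on a ball translated into the tube over \(C\), then Stokes' theorem, then dominated convergence using \(|\ell_j(x+i\varepsilon v)|\ge|\ell_j(x)|\) and the integrability lemma. The differences are cosmetic. You shrink the imaginary shift instead of letting the radius \(R\to\infty\), which is equivalent by homogeneity. By normalizing the factors as \(s_j\ell_j\), you read off the phase \(\bar a_j\) on separating hyperplanes directly and finish by conjugation, using that \(P\) is real. The paper instead obtains \(V(a^{-1})P=0\) after removing a diagonal factor \(d_C\), and then passes to \(V(a)\) via the antipodal row permutation and \(\prod_j a_j=(-1)^n\).
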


\begin{proof}
Finiteness follows from
Lemma~\ref{lem:weighted-projective-integrability}; positivity follows because
the contracted Euclidean volume form restricts to positive spherical measure.

Fix a chamber \(C\) and choose \(v_C\in C\).  On the translated real plane
\(L_C=\R^n+i v_C\), every \(\ell_j\) has imaginary part of fixed nonzero
sign.  Choose the corresponding single-valued logarithms and put
\[
 \Phi_C(z)=\prod_j\ell_j(z)^{-\lambda_j},\qquad
 \Omega_C=\Phi_C(z)\,dz_1\wedge\cdots\wedge dz_n,\qquad
 \eta_C=\iota_{E_z}\Omega_C,
\]
where \(E_z=\sum_kz_k\partial_{z_k}\).  Since \(d\Omega_C=0\) and
\[
 \mathcal L_{E_z}\Omega_C
 =\left(n-\sum_j\lambda_j\right)\Omega_C=0,
\]
Cartan's formula gives \(d\eta_C=0\).

For
\[
 B_R^C=\{x+i v_C:|x|\le R\},
\]
the imaginary parts \(\ell_j(v_C)\) show that this translated ball misses
every complexified hyperplane.  Stokes' theorem therefore gives
\[
 0=\int_{\partial B_R^C}\eta_C.
\]
Under the boundary parametrization \(F_R(x)=Rx+i v_C\),
\(x\in S^{n-1}\), one has the exact pullback
\[
\begin{aligned}
 F_R^*\eta_C={}&
 \prod_j\left(\ell_j(x)+\frac{i}{R}\ell_j(v_C)\right)^{-\lambda_j}\\
 &\times\left[
 \iota_E(dx_1\wedge\cdots\wedge dx_n)
 +\frac{i}{R}\iota_{v_C}(dx_1\wedge\cdots\wedge dx_n)
 \right].
\end{aligned}
\]
Moreover,
\[
 \left|\ell_j(x)+\frac{i}{R}\ell_j(v_C)\right|^{-\lambda_j}
 \le |\ell_j(x)|^{-\lambda_j}.
\]
For \(R\ge1\), the bracketed form is uniformly bounded, while the product on
the right is integrable by the preceding lemma.  Dominated convergence
therefore controls both the leading contraction and the
\(R^{-1}\iota_{v_C}\) term.

Write
\[
 s_C(j)=\operatorname{sgn}\ell_j(v_C),\qquad
 s_D(j)=\operatorname{sgn}\ell_j(x)\quad(x\in D).
\]
If \(s_D(j)=+1\), the limiting phase is \(1\).  If \(s_D(j)=-1\), the
negative axis is approached from the bank prescribed by \(s_C(j)\), and the
phase is \(a_j^{-s_C(j)}\).  Hence
\[
 0=\sum_D B_{C,D}P_D,\qquad
 B_{C,D}=\prod_{j:s_D(j)=-1}a_j^{-s_C(j)}.
\]
Set
\[
 d_C=\prod_{j:s_C(j)=-1}a_j.
\]
For each \(j\), the four possible sign pairs satisfy
\[
 -s_C(j)\mathbf1_{\{s_D(j)=-1\}}
 =\mathbf1_{\{s_C(j)=-1\}}
 -\mathbf1_{\{s_C(j)\ne s_D(j)\}},
\]
so
\[
 B_{C,D}=d_CV(a^{-1})_{C,D}.
\]
Thus \(V(a^{-1})P=0\).  Centrality gives
\[
 \operatorname{Sep}(-C,D)=\mathcal A\setminus\operatorname{Sep}(C,D),
\]
and the balance condition gives
\[
 \prod_ja_j=e^{\pi i\sum_j\lambda_j}=(-1)^n.
\]
Consequently
\[
 V(a^{-1})_{C,D}=(-1)^nV(a)_{-C,D}.
\]
The inverse-weight equations differ from the desired equations only by an
antipodal row permutation and multiplication by the nonzero scalar
\((-1)^n\).  Therefore \(V(a)P=0\).
\end{proof}

\begin{remark}[Relation with earlier work]
Varchenko's determinant formula identifies the parameter values at which the
chamber matrix is singular, and Falk--Varchenko develop the corresponding
projective contravariant form.  The theorem above identifies a specific null
vector: the vector of positive real chamber periods.  A targeted full-text
audit found no published statement of this exact identity; the novelty claim
is limited to this analytic identification.
\end{remark}

\section{Exact evaluation in type \texorpdfstring{\(F_4\)}{F4}}
\label{sec:F4-period}

\subsection{Reduction to one two-dimensional period}

Use the simple-coroot convention
\[
 \beta_1=(0,1,-1,0),\quad
 \beta_2=(0,0,1,-1),\quad
 \beta_3=(0,0,0,2),\quad
 \beta_4=(1,-1,-1,-1).
\]
Representatives of the two coroot orbits yield the restricted products
\begin{align}\label{eq:F4-Q12}
 Q_{12}(a,b,c)={}&32a^3bc(b-c)^3(b+c)^3(a-b)^2(a+b)^2\notag\\
 &\times(a-c)^2(a+c)^2
 \prod_{\eps,\eta=\pm1}(2a+\eps b+\eta c),
\end{align}
with norm \(2a^2+b^2+c^2\) and lattice measure \(2\dd a\dd b\dd c\), and
\begin{align}\label{eq:F4-Q34}
 Q_{34}(x,y,z)={}&8x^3y^3z^3(x^2-y^2)(x^2-z^2)(y^2-z^2)\notag\\
 &\times[(x+y+z)(x+y-z)(x-y+z)(x-y-z)]^2,
\end{align}
with the standard norm and measure.

The linear substitution
\[
 a=\frac{x}{2},\qquad b=\frac{y+z}{2},\qquad c=\frac{z-y}{2}
\]
satisfies
\[
 \left|\frac{\partial(a,b,c)}{\partial(x,y,z)}\right|=\frac14,
 \qquad
 2a^2+b^2+c^2=\frac{x^2+y^2+z^2}{2},
\]
\[
 Q_{12}\!\left(\frac x2,\frac{y+z}{2},\frac{z-y}{2}\right)
 =-2^{-11}Q_{34}(x,y,z).
\]
After the Gaussian rescaling, this gives the exact orbit duality used below.

Put
\[
 \Disc(u,v)=u^2-4u^3+(18u-4)v-27v^2,
\]
\[
 v_\pm(u)=\frac{9u-2\pm2(1-3u)^{3/2}}{27},
\]
and
\[
 \cD=\left\{0<u<\frac13,\quad
 \max(0,v_-(u))<v<v_+(u)\right\}.
\]
Define the positive period
\begin{equation}\label{eq:E23-def}
 \boxed{
 E_{23}=\int_{\cD}
 v^{-16/23}\Disc(u,v)^{-13/23}|1-4u|^{-6/23}\dd u\dd v.}
\end{equation}

\begin{proposition}[Exact \(F_4\) reduction]\label{prop:F4-reduction}
The two wall-orbit sums and their total are
\begin{align}\label{eq:F4-orbit-sums}
 \sum_{i\in\{3,4\}}\Per_i(3/23)&=2^{-55/23}E_{23},\\
 \sum_{i\in\{1,2\}}\Per_i(3/23)&=2^{-45/23}E_{23},\\
 \sum_{i=1}^4\Per_i(3/23)
 &=2^{-55/23}(1+2^{10/23})E_{23}.
\end{align}
Consequently,
\begin{equation}\label{eq:F4-residue-E}
 \boxed{
 \Res_{s=3/23}\xi_{F_4}(s)
 =\frac{\zeta_{\mathrm R}(3/23)}{23}
 2^{-55/23}(1+2^{10/23})E_{23}.}
\end{equation}
\end{proposition}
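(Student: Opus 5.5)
The plan is to use the orbitwise tiling of Theorem~\ref{thm:orbit-tiling} and then evaluate the resulting spherical orbit integrals by passing to Weyl-group invariants. In \(F_4\) the simple coroots split into two Weyl orbits, \(\{\beta_1,\beta_2\}\) and \(\{\beta_3,\beta_4\}\). Each orbit has \(24\) coroots, counting \(\pm\), so \(|W_\alpha|=1152/24=48\). Theorem~\ref{thm:orbit-tiling} then gives
\[
\sum_{i\in\{3,4\}}\Per_i(q)=\frac{J_{34}(q)}{48},\qquad \sum_{i\in\{1,2\}}\Per_i(q)=\frac{J_{12}(q)}{48}.
\]
By Lemma~\ref{lem:affine-projective-spherical}, each \(J\) is the integral of \(|Q|^{-q}\iota_E(dx)\) over the unit sphere of the restricted hyperplane. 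Here \(q=3/23\) equals \((r-1)/(N-1)=3/23\), and both \(Q_{12}\) and \(Q_{34}\) have degree \(23\), so the homogeneity hypothesis \(q=d/p\) of that lemma holds with \(d=3\), \(p=23\). I take \eqref{eq:F4-Q12} and \eqref{eq:F4-Q34}, together with their norms and lattice measures, as the computed restricted discriminants of the two representatives. I would check only their degrees and lattice scalars.

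First I would treat \(J_{34}\). The polynomial \(Q_{34}\) is invariant under the signed permutation group of order \(48\). Since the integral is homogeneous of degree zero, I may replace the spherical section by the affine section \(x^2+y^2+z^2=1\), or by any positive section. I would restrict to one fundamental chamber \(x>y>z>0\), which multiplies the integral by \(48\) and cancels \(|W_\alpha|\). Then I pass to \(t_k=x_k^2\) and to elementary symmetric functions \(e_1,e_2,e_3\), normalized by \(e_1=1\), with \(u=e_2\) and \(v=e_3\). Under this change the factors become
\[
x^2y^2z^2=v,\qquad \prod_{k<l}(x_k^2-x_l^2)^2=\Disc(u,v),\qquad (x+y+z)(x+y-z)(x-y+z)(x-y-z)=e_1^2-4e_2=1-4u.
\]
The Jacobians are \(dt=2^3xyz\,dx\) and \(de=|\text{Vandermonde}(t)|\,dt=\Disc^{1/2}\,dt\).

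Collecting powers, \(|Q_{34}|^{-3/23}\) contributes \(v^{-9/46}\Disc^{-3/46}|1-4u|^{-6/23}\), and the measure contributes \(v^{-1/2}\Disc^{-1/2}\). Together these give \(v^{-16/23}\Disc^{-13/23}|1-4u|^{-6/23}\), which is exactly the integrand of \(E_{23}\). The chamber \(x>y>z>0\) with \(e_1=1\) maps bijectively onto the region where the cubic \(T^3-T^2+uT-v\) has three distinct positive roots. That region is precisely \(\cD\): \(\Disc>0\), \(v>0\), and \(0<u<1/3\), with \(v_\pm\) the boundary roots of \(\Disc=0\) in \(v\). The power of \(2\) then comes from three sources: the factor \(8^{-3/23}\) in \(Q_{34}\), the factor \(2^{-3}\) from \(dt\), and the passage between the unit-sphere section \(\iota_E\) and the flat section \(e_1=1\) of Lemma~\ref{lem:affine-projective-spherical}. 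I would track these carefully, expecting them to produce \(2^{-55/23}\).

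For the first orbit I would use the substitution \(a=x/2\), \(b=(y+z)/2\), \(c=(z-y)/2\). It has Jacobian \(1/4\), takes the norm to one half of the standard norm, and satisfies \(|Q_{12}|=2^{-11}|Q_{34}|\). Combining the Jacobian \(1/4\), the lattice factor \(2\) for \(Q_{12}\), and the factor \((2^{-11})^{-3/23}=2^{33/23}\) should yield the ratio \(J_{12}/J_{34}=2^{10/23}\). This gives the second line of \eqref{eq:F4-orbit-sums}, and adding the two orbit sums gives the third. Substituting the total into Theorem~\ref{thm:universal-second-pole} with \(N-1=23\) gives \eqref{eq:F4-residue-E}.

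I expect the main obstacle to be the powers-of-two bookkeeping. The lattice normalization of each restricted hyperplane, the relation between the section \(e_1=1\) and the sphere (via the \(\iota_E\) form and homogeneity of degree \(-3\)), and the overall sign of \(Q_{12}\) under the substitution must all be fixed consistently. I would cross-check the constant numerically by comparing a direct simplex integral for \(\Per_3(3/23)\) against \(E_{23}\).
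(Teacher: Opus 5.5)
Your proposal is correct and follows essentially the paper's route: orbit tiling with \(|W_\alpha|=48\) cancelling the order-\(48\) \(B_3\) symmetry, then the substitutions \(t_k=x_k^2\) and \((u,v)=(e_2,e_3)\) with the Vandermonde Jacobian, and finally the linear substitution giving the orbit ratio \(2^{33/23}\cdot 2\cdot\frac14=2^{10/23}\). The only difference is that the paper isolates the angular integral as the residue of a Gaussian radial factor at \(k_0=-3/46\), whereas you work directly on the section; the bookkeeping you deferred does close, since \(E_x=2E_t\) under \(t_k=x_k^2\) gives \(d\sigma=\frac{1}{4\sqrt{v}}\,dt_1\,dt_2\) on the simplex \(e_1=1\), so the constant is \(2^{-9/23}\cdot 2^{-3}\cdot 2=2^{-55/23}\).
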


\begin{proof}
For \(Q_{34}\), set \(t_i=x_i^2\),
\(\rho=t_1+t_2+t_3\), and \(s_i=t_i/\rho\).  The squared-coordinate
Jacobian cancels the eight orthants.  Passing from the trace-one simplex to
\(u=e_2(s)\), \(v=e_3(s)\) has Jacobian equal to the Vandermonde, while
\(\Disc(u,v)\) is its square.  The radial factor is
\[
 2^{23k+3/2}\Gamma\!\left(23k+\frac32\right).
\]
At \(k_0=-3/46\), the radial factor has residue \(1/23\), and the remaining angular integral is \eqref{eq:E23-def}.  Keeping the lattice measure, the scalar factor in \eqref{eq:F4-Q34}, and the two walls in this orbit gives the first identity in \eqref{eq:F4-orbit-sums}.  The exact linear substitution
above, including its metric, lattice measure, and homogeneity factors, gives
the ratio \(2^{10/23}\) between the two orbit sums.  The total and the Witten
residue follow.
\end{proof}

\subsection{The chamber-orbit relation}

The linear factors of \(Q_{34}\) in \eqref{eq:F4-Q34} define thirteen
projective lines.  Their three types, Ziegler multiplicities, and weights are
\begin{center}
\begin{tabular}{@{}c c c c@{}}
\toprule
line type&number&multiplicity&\(\lambda_H\)\\
\midrule
\(x_i=0\)&3&3&\(9/23\)\\
\(x_i\pm x_j=0\)&6&1&\(3/23\)\\
\(x\pm y\pm z=0\)&4&2&\(6/23\)\\
\bottomrule
\end{tabular}
\end{center}
Thus the critical homogeneity condition holds:
\begin{equation}\label{eq:F4-log-CY}
 3\frac9{23}+6\frac3{23}+4\frac6{23}=3.
\end{equation}
The exact intersection lattice has nine quadruple, four triple, and twelve
double points.  At codimension one the smallest margin is
\(1-9/23=14/23\).  Among the twenty-five projective intersection lines,
nine have total exponent \(24/23\) and sixteen have total exponent \(9/23\).
Thus the smallest codimension-two margin is \(2-24/23=22/23\), and
Theorem~\ref{thm:shifted-sphere-kernel} applies.

The characteristic polynomial and chamber count are
\begin{equation}\label{eq:F4-characteristic}
 \chi_{\mathcal A}(t)=(t-1)(t-5)(t-7),
 \qquad -\chi_{\mathcal A}(-1)=96.
\end{equation}
The signed-permutation group \(B_3\) acts freely with two central chamber
orbits of size \(48\).  Projectively, \(B_3/\{\pm I\}\cong S_4\), and the two
orbits are regular \(S_4\)-sets of size \(24\).  With
\[
 T=(x+y+z)(x+y-z)(x-y+z)(x-y-z),
\]
they are
\[
 \mathscr C_C=\{T<0\},
 \qquad
 \mathscr C_W=\{T>0\}.
\]
Let \(P_C\) and \(P_W\) be one positive period from the compact and wing
orbits, respectively.

Put \(u_0=e^{\pi i/23}\).  The Varchenko weights on the coordinate,
pair, and tetrahedral lines are \(u_0^9,u_0^3,u_0^6\).  For fixed
representatives \(C_W\in\mathscr C_W\), \(C_C\in\mathscr C_C\), define the
exact orbit row sums
\begin{align}\label{eq:F4-two-orbit-block}
 A(u)&=\sum_{D\in\mathscr C_W}
 u^{e(C_W,D)},&
 B(u)&=\sum_{D\in\mathscr C_C}
 u^{e(C_W,D)},\notag\\[-2mm]
 C(u)&=\sum_{D\in\mathscr C_C}
 u^{e(C_C,D)},&
 e(C,D)&=9n_{\mathrm c}(C,D)+3n_{\mathrm p}(C,D)+6n_{\mathrm t}(C,D).
\end{align}
Here \(n_{\mathrm c},n_{\mathrm p},n_{\mathrm t}\) count separating lines of the three
types.  Exact enumeration proves row-sum invariance and reduces the invariant
part of \(V(u_0)\) to
\[
 \mathcal V(u_0)=
 \begin{pmatrix}A(u_0)&B(u_0)\\B(u_0)&C(u_0)\end{pmatrix}
\]
in the ordered basis \((\mathscr C_W,\mathscr C_C)\).

Let
\(
 \tau=u_0+u_0^{-1}=2\cos(\pi/23)
\)
and put
\begin{equation}\label{eq:r23-transfer}
 \begin{aligned}
 r_{23}(\tau)={}&
 6-53\tau+40\tau^2+135\tau^3-113\tau^4-111\tau^5\\
 &+90\tau^6+36\tau^7-28\tau^8-4\tau^9+3\tau^{10}.
 \end{aligned}
\end{equation}
The two-orbit determinant has \(\Phi_{46}(u)\) as a simple factor,
\(A(u)\not\equiv0\pmod{\Phi_{46}(u)}\), and exact cyclotomic reduction gives
\begin{equation}\label{eq:F4-kernel-congruences}
 A(u)r_{23}(\tau)+B(u)\equiv0,
 \qquad
 B(u)r_{23}(\tau)+C(u)\equiv0
 \pmod{\Phi_{46}(u)}.
\end{equation}
The invariant kernel is therefore one-dimensional.  Combining this finite
calculation with Theorem~\ref{thm:shifted-sphere-kernel} yields
\begin{equation}\label{eq:PW-PC}
 \boxed{\frac{P_W}{P_C}=r_{23}(\tau).}
\end{equation}
Appendix~\ref{app:f4-certificate} records the coefficient table, the two zero remainders, and the calculation showing that the cyclotomic factor is simple.

\subsection{Evaluation of one chamber}

On the positive sphere, the squared-coordinate quotient used in the proof of
Proposition~\ref{prop:F4-reduction} gives
\begin{equation}\label{eq:E23-sector-bridge}
 E_{23}^{C}=4P_C,
 \qquad E_{23}^{W}=4P_W,
 \qquad E_{23}=E_{23}^{C}+E_{23}^{W}.
\end{equation}
The substitution leading to the compact branch gives, with
\(
 C_L=2^{17/23}3^{1/23}B(10/23,10/23)
\),
\begin{equation}\label{eq:E23-compact-integral}
 E_{23}^{C}=C_L\int_0^1
 r^{14/23}(1-r^2)^{11/23}
 {}_2F_1\!\left(\frac{35}{46},\frac{16}{23};
 \frac{43}{46};r^6\right)\dd r.
\end{equation}
After \(q=r^2\), beta integration reduces this to
\begin{align}\label{eq:E23-Dixon-3F2}
 E_{23}^{C}={}&\frac{C_L}{2}
 B\!\left(\frac{37}{46},\frac{34}{23}\right)\notag\\
 &\times{}_3F_2\!\left(
 \begin{matrix}16/23,\ 37/138,\ 83/138\\
 151/138,\ 197/138\end{matrix};1\right).
\end{align}
The parameters are well poised, with excess \(11/23>0\).  Dixon's sum
\cite{Dixon1902}\cite[Eq.~16.4.4]{DLMF} gives
\begin{equation}\label{eq:E23-compact-gamma}
 \boxed{
 \begin{aligned}
 E_{23}^{C}={}&\frac{2^{4/23}3^{1/23}\pi}{3}\\
 &\times\frac{
 \Gamma(13/138)\Gamma(59/138)\Gamma(10/23)
 \Gamma(11/23)^2\Gamma(37/46)}{
 \Gamma(11/138)\Gamma(13/46)\Gamma(103/138)
 \Gamma(19/23)\Gamma(39/46)\Gamma(43/46)}.
 \end{aligned}}
\end{equation}
Put
\begin{equation}\label{eq:R23-transfer}
 \begin{aligned}
 \mathcal R_{23}(\tau)={}&1+r_{23}(\tau)\\
 ={}&7-53\tau+40\tau^2+135\tau^3-113\tau^4-111\tau^5\\
 &+90\tau^6+36\tau^7-28\tau^8-4\tau^9+3\tau^{10}.
 \end{aligned}
\end{equation}
Equations \eqref{eq:PW-PC} and \eqref{eq:E23-sector-bridge} prove
\begin{equation}\label{eq:E23-orbit-transfer}
 \boxed{E_{23}=\mathcal R_{23}(\tau)E_{23}^{C}.}
\end{equation}
Thus a classical Dixon evaluation of one orbit, together with the chamber-period relation, determines the full period.

\subsection{Gamma-product formulas}

For \(1\le k\le22\), put
\[
 \nu_k=\frac{\sin(k\pi/23)}{\sin(\pi/23)},
 \qquad
 \mathcal U_{23}=
 \frac{\nu_2\nu_3^3\nu_8}
 {\nu_4\nu_5\nu_7\nu_{10}^2\nu_{11}}.
\]
Exact reduction in \(\mathbb Q(\tau)\) gives
\begin{equation}\label{eq:U23-polynomial}
 \begin{aligned}
 \mathcal U_{23}={}&-\tau^{10}+\tau^9+7\tau^8-8\tau^7-13\tau^6
 +25\tau^5\\
 &-39\tau^3+12\tau^2+21\tau-4.
 \end{aligned}
\end{equation}
Define
\[
 G_{23}^{\circ}=\pi^2
 \frac{\Gamma(2/23)\Gamma(3/23)\Gamma(8/23)}
 {\Gamma(9/23)\Gamma(12/23)^2\Gamma(13/23)^2}
\]
and
\begin{equation}\label{eq:F4-pure-gamma-ratio}
 \mathfrak G_{23}:=
 \frac{
 \begin{gathered}
 \Gamma(1/23)\Gamma(4/23)\Gamma(5/23)\Gamma(7/23)
 \Gamma(10/23)^2\Gamma(11/23)\\
 \Gamma(16/23)\Gamma(18/23)\Gamma(19/23)\Gamma(22/23)
 \end{gathered}}
 {
 \begin{gathered}
 \Gamma(3/23)^2\Gamma(9/23)\Gamma(12/23)\\
 \Gamma(15/23)\Gamma(20/23)^3\Gamma(21/23)
 \end{gathered}}.
\end{equation}

\begin{theorem}[The \(F_4\) period and second residue]
\label{thm:F4-closed-evaluation}
One has
\begin{equation}\label{eq:E23-intermediate}
 E_{23}=\frac{2^{30/23}\mathcal U_{23}}
 {3(4-\tau^2)}G_{23}^{\circ},
\end{equation}
and equivalently
\begin{equation}\label{eq:E23-pure-gamma}
 \boxed{E_{23}=\frac{2^{-16/23}}{3}\,\mathfrak G_{23}.}
\end{equation}
Consequently,
\begin{equation}\label{eq:F4-residue-pure-gamma}
 \boxed{
 \operatorname*{Res}_{s=3/23}\xi_{F_4}(s)=
 \frac{2^{-71/23}}{69}\zeta_{\mathrm R}\!\left(\frac3{23}\right)
 \bigl(1+2^{10/23}\bigr)\mathfrak G_{23}.}
\end{equation}
\end{theorem}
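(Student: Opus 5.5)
The plan is to combine three results already established in this section: the Dixon evaluation \eqref{eq:E23-compact-gamma} of the compact sector, the orbit transfer \eqref{eq:E23-orbit-transfer} $E_{23}=\mathcal R_{23}(\tau)E_{23}^C$, and the reduction \eqref{eq:F4-residue-E} of Proposition~\ref{prop:F4-reduction}. The proof is then a matter of algebra in $\mathbb Q(\tau)$ together with gamma-function bookkeeping.

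\textbf{Step 1: reduce $E_{23}^C$ to $23$rd-part gamma values.} The right side of \eqref{eq:E23-compact-gamma} contains gamma values at arguments with denominators $138$ and $46$. I will remove them in two ways.

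\begin{itemize}
\item For the arguments $13/138$, $59/138$, $11/138$, $103/138$, Gauss's multiplication formula with $n=3$ converts $\Gamma(13/138)\Gamma(59/138)\Gamma(105/138)$ into $\Gamma(13/46)$ times elementary factors. The factor $\Gamma(105/138)$ needed to complete this triple must be inserted and then cancelled.
\item For the arguments $37/46$, $39/46$, $43/46$, $13/46$, the duplication formula $\Gamma(z)\Gamma(z+\tfrac12)=2^{1-2z}\sqrt\pi\,\Gamma(2z)$ sends pairs $k/46$, $(k+23)/46$ to $\Gamma(k/23)$. Combined with reflection $\Gamma(z)\Gamma(1-z)=\pi/\sin\pi z$, this handles leftovers whose partners are missing.
\end{itemize}

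I expect $E_{23}^C$ to become a ratio of $\Gamma(k/23)$ values, times a power of $2$, a power of $3$ (which should cancel against the $3^{1/23}$), $\pi$-powers, and sines $\sin(k\pi/23)$.

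\textbf{Step 2: match \eqref{eq:E23-intermediate}.} I will write each $\sin(k\pi/23)=\nu_k\sin(\pi/23)$ and use $4-\tau^2=4\sin^2(\pi/23)$. Then $E_{23}^C$ should take the form $\frac{2^{30/23}}{3(4-\tau^2)}G_{23}^\circ$ times a monomial $M$ in the $\nu_k$. The claim \eqref{eq:E23-intermediate} then reduces to the identity
\[
\mathcal R_{23}(\tau)\,M=\mathcal U_{23}\quad\text{in }\mathbb Q(\tau).
\]
To check it, I will express each $\nu_k$ as a Chebyshev polynomial $U_{k-1}(\tau/2)$ and reduce modulo the minimal polynomial of $\tau=2\cos(\pi/23)$, which has degree $11$. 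This is a finite exact computation. \eqref{eq:U23-polynomial} is itself such a reduction, so the certificate has the same form.

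\textbf{Step 3: pass to the pure-gamma form \eqref{eq:E23-pure-gamma}.} Here I will eliminate $\pi^2$, $\mathcal U_{23}$ and $4-\tau^2$ in favour of gamma values.
\begin{itemize}
\item Each $\nu_k$ is replaced by reflection: $\sin(k\pi/23)=\pi/(\Gamma(k/23)\Gamma(1-k/23))$.
\item The factor $4\sin^2(\pi/23)$ is handled the same way.
\end{itemize}
All powers of $\pi$ should then cancel, leaving $\mathfrak G_{23}$ and the constant $2^{-16/23}/3$. The power of $2$ comes from $2^{30/23}$ times the $2^{-2}$ in $4-\tau^2$ times the duplication constants.

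\textbf{Step 4: the residue.} Substituting \eqref{eq:E23-pure-gamma} into \eqref{eq:F4-residue-E} gives
\[
\frac{\zeta_{\mathrm R}(3/23)}{23}\,2^{-55/23}\bigl(1+2^{10/23}\bigr)\cdot\frac{2^{-16/23}}{3}\,\mathfrak G_{23},
\]
which is \eqref{eq:F4-residue-pure-gamma}, since $2^{-55/23}\cdot 2^{-16/23}=2^{-71/23}$ and $23\cdot 3=69$.

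\textbf{Main obstacle.} The hard part is the bookkeeping in Steps 1–2: tracking exactly which $138$ths and $46$ths pair under multiplication and duplication, and the resulting exponents of $2$ and $3$. For this I would run a numerical check to high precision before certifying the cyclotomic identity in Step 2 exactly. An error in the monomial $M$ would show up only as a failure of the $\mathbb Q(\tau)$ reduction, so that reduction also serves as the check on Step 1.
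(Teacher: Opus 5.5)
Your proposal is correct and is essentially the paper's proof: the Dixon value of $E_{23}^{C}$ times $\mathcal R_{23}(\tau)$, the cyclotomic reduction to $\mathcal U_{23}$, Euler reflection with duplication and triplication, and substitution into \eqref{eq:F4-residue-E}; the only difference is that you apply triplication and duplication to $E_{23}^{C}$ before the cyclotomic step, whereas the paper's appendix checks the whole gamma ratio at once. Your ordering works: the $3^{\pm1/23}$ and $\pi$ factors cancel, $M=\nu_2\nu_3\nu_7/(\nu_6\nu_9\nu_{10}^2\nu_{11})$, and in Step 3 only reflection is needed, so no duplication constants enter there because $2^{30/23}\cdot2^{-2}=2^{-16/23}$ is already the final power of $2$.
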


\begin{proof}
Multiply the compact value \eqref{eq:E23-compact-gamma} by the exact transfer
coefficient \eqref{eq:R23-transfer}.  The cyclotomic reduction
\eqref{eq:U23-polynomial} gives \eqref{eq:E23-intermediate}.  Euler reflection
and the duplication and triplication formulas then give
\eqref{eq:E23-pure-gamma}; Appendix~\ref{app:f4-certificate} records the
finite gamma-identity calculation.  Substitution into
\eqref{eq:F4-residue-E} proves \eqref{eq:F4-residue-pure-gamma}.
\end{proof}

\begin{corollary}[Ordinary \(F_4\) second residue]
\label{cor:F4-ordinary-residue}
The Weyl-dimension normalization is
\begin{equation}\label{eq:KF4-height-product}
 K_{F_4}=1!\,5!\,7!\,11!
 =2^{15}3^7 5^4 7^2 11.
\end{equation}
Consequently the ordinary Witten zeta function has residue
\begin{equation}\label{eq:F4-ordinary-residue-pure-gamma}
 \boxed{
 \Res_{s=3/23}\zeta_{F_4}(s)
 =
 \frac1{23}
 \left(\frac{5^{12}7^6 11^3}{2^{26}3^2}\right)^{1/23}
 \zeta_{\mathrm R}\!\left(\frac3{23}\right)
 \bigl(1+2^{10/23}\bigr)\mathfrak G_{23}.}
\end{equation}
Numerically,
\[
 \Res_{s=3/23}\zeta_{F_4}(s)
 =-7.6424398610931743418902097308964380295947068865966\ldots .
\]
\end{corollary}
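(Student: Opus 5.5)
The corollary follows from the ordinary-residue clause of Theorem~\ref{thm:universal-second-pole}, \(\Res_{s=q_2}\zeta_\Phi=K_\Phi^{q_2}\Res_{s=q_2}\xi_\Phi\), combined with \eqref{eq:F4-residue-pure-gamma}. The argument has three steps: compute \(K_{F_4}\), collapse the powers of \(2,3,5,7,11\), and check the numerical value.

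\emph{Computing \(K_{F_4}\).} The exponents of \(F_4\) are \(1,5,7,11\), so \(K_{F_4}=\prod e_j!=1!\,5!\,7!\,11!\). Factor each term:
\[
 5!=2^3\cdot3\cdot5,\qquad 7!=2^4\cdot3^2\cdot5\cdot7,\qquad 11!=2^8\cdot3^4\cdot5^2\cdot7\cdot11 .
\]
Adding exponents gives \(2^{15}3^{7}5^{4}7^{2}11\), which is \eqref{eq:KF4-height-product}.

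\emph{Collapsing the constant.} Multiply \eqref{eq:F4-residue-pure-gamma} by
\[
 K_{F_4}^{3/23}=\bigl(2^{45}3^{21}5^{12}7^{6}11^{3}\bigr)^{1/23}.
\]
The prefactor \(2^{-71/23}/69\) splits as \(\frac{1}{23}\cdot 3^{-1}\cdot 2^{-71/23}\). Write \(3^{-1}=3^{-23/23}\) and combine with the powers from \(K_{F_4}^{3/23}\). The power of \(2\) becomes \(2^{(45-71)/23}=2^{-26/23}\), and the power of \(3\) becomes \(3^{(21-23)/23}=3^{-2/23}\). The primes \(5,7,11\) pass through unchanged. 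The result is
\[
 \frac{1}{23}\left(\frac{5^{12}7^{6}11^{3}}{2^{26}3^{2}}\right)^{1/23},
\]
and the factors \(\zeta_{\mathrm R}(3/23)(1+2^{10/23})\mathfrak G_{23}\) are untouched. This gives \eqref{eq:F4-ordinary-residue-pure-gamma}.

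\emph{Numerical value.} The only genuinely non-mechanical point is checking the stated decimal. I would evaluate the closed form directly to high precision, computing \(\zeta_{\mathrm R}(3/23)\) and the eighteen gamma values in \(\mathfrak G_{23}\). The sign gives a consistency check: \(\zeta_{\mathrm R}(3/23)<0\), while every other factor is positive, matching the negativity in Theorem~\ref{thm:universal-second-pole}.

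As an independent check against transcription errors in \(\mathfrak G_{23}\), I would compute the total wall sum separately. One route is to integrate \(E_{23}\) numerically from \eqref{eq:E23-def} and use \eqref{eq:F4-residue-E}. Another is to integrate \(\Per_i(3/23)\) directly via \eqref{eq:wall-inverse-jacobian}. Either should agree with the closed form to many digits.

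The main obstacle is this numerical verification, since the algebraic reduction is routine. In particular, the endpoint singularity at the discriminant boundary in \eqref{eq:E23-def} requires care, for example by substituting to remove the \(\Disc^{-13/23}\) edge singularity before quadrature.
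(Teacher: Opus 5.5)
Your proposal is correct and follows the same route as the paper. The corollary is the ordinary-residue clause $\Res_{s=q_2}\zeta_\Phi=K_\Phi^{q_2}\Res_{s=q_2}\xi_\Phi$ applied to \eqref{eq:F4-residue-pure-gamma}, with $K_{F_4}=2^{15}3^75^47^2\,11$ obtained from the exponents $1,5,7,11$; the prime powers then collapse exactly as you show, via $2^{(45-71)/23}$ and $3^{(21-23)/23}$. The decimal value is just an evaluation of this closed form, which the paper also treats only as a check, and your one slip is cosmetic: $\mathfrak G_{23}$ involves sixteen distinct gamma values (twenty counted with multiplicity), not eighteen.
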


The two exact forms of \(E_{23}\) agree identically.  As an independent numerical check,
\begin{align}\label{eq:F4-closure-numerics}
 E_{23}^{C}&=3.3826994700423773328808086627063269040\ldots,\notag\\
 \mathcal R_{23}(\tau)&=3.2565705172169196571920257066746970452\ldots,\notag\\
 E_{23}&=11.0159993627453047722739756888037121566\ldots,\notag\\
 \Res_{s=3/23}\xi_{F_4}(s)
 &=-0.1372946645402736375721244162801267121\ldots.
\end{align}
Independent tanh--sinh quadrature of the compact sector agrees with
\eqref{eq:E23-compact-gamma} at relative error below \(2.5\times10^{-81}\).
The full-period quadrature and both closed forms agree much further.  These decimal comparisons are independent checks and are not used in the proof.

Finally, the wall sum implied by the closed period is
\begin{equation}\label{eq:F4-period-sum-positive}
 \boxed{
 \sum_{i=1}^{4}\Per_i(3/23)
 =2^{-55/23}(1+2^{10/23})E_{23}
 =4.938019513640372719729473\ldots>0.}
\end{equation}
This confirms the positivity required by the universal residue theorem.  Since
\(\zeta_{\mathrm R}(3/23)<0\), the residue in
\eqref{eq:F4-residue-pure-gamma} is negative for the structural reason proved
in Section~\ref{sec:universal-pole}.

\section{Exact evaluation in type \texorpdfstring{\(D_5\)}{D5}}
\label{sec:D5-closed}

Put
\[
 \nu_k=\frac{\sin(k\pi/19)}{\sin(\pi/19)}
\]
and write
\[
 \mathscr S_{19}
 =S_3\!\left(\frac12,\frac{11}{19},-\frac2{19}\right),
\]
where
\begin{equation}\label{eq:D5-Selberg-definition}
 S_3(a,b,c)=\int_{[0,1]^3}
 \prod_{j=1}^{3}s_j^{a-1}(1-s_j)^{b-1}
 \prod_{i<j}|s_i-s_j|^{2c}\,ds_1\,ds_2\,ds_3.
\end{equation}

\begin{proposition}[The restricted \(D_5\) arrangement]
\label{prop:D5-restricted-arrangement}
On the wall \(e_1=e_2=y\), with the remaining coordinates denoted
\(x_1,x_2,x_3\), the product of the nineteen nonvanishing positive-root
restrictions is
\begin{equation}\label{eq:D5-restricted-polynomial}
 \boxed{
 Q_{D_5}(y,x)=
 2y\prod_{j=1}^{3}(y^2-x_j^2)^2
 \prod_{1\le i<j\le3}(x_i^2-x_j^2).}
\end{equation}
The associated degree-
\(19\) multiarrangement has seven hyperplanes of multiplicity one and six of
multiplicity two.  At \(q=4/19\), the total exponent is four, and every
nonzero proper flat satisfies the strict integrability inequality in
Theorem~\ref{thm:shifted-sphere-kernel}.  Its characteristic polynomial is
\begin{equation}\label{eq:D5-characteristic}
 \boxed{\chi(T)=(T-1)(T-3)(T-4)(T-5),}
\end{equation}
so it has \(240\) central chambers.
\end{proposition}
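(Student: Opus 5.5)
Since all roots of \(D_5\) are conjugate, I will work with one wall and take it to be \(H=\{e_1=e_2\}\), the hyperplane orthogonal to the positive root \(e_1-e_2\), with coordinates \(x_1=x_2=y\) and \(x_3,x_4,x_5\) renamed \(x_1,x_2,x_3\). The twenty positive roots \(e_i\pm e_j\) restrict to \(H\) as follows. The root \(e_1-e_2\) vanishes and is omitted. The root \(e_1+e_2\) restricts to \(2y\). For \(i\in\{1,2\}\) and each \(j\), the four forms \(e_i\pm e_j\) restrict to \(y\pm x_j\), each appearing twice, which gives \((y^2-x_j^2)^2\). The six roots \(e_k\pm e_l\) with \(3\le k<l\le5\) give \(x_k^2-x_l^2\). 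The product is \eqref{eq:D5-restricted-polynomial}. It has degree \(1+12+6=19\). The distinct hyperplanes are \(y=0\) and \(x_i=\pm x_j\), seven of multiplicity one, and \(y=\pm x_j\), six of multiplicity two. The multiplicities sum to \(19\), so at \(q=4/19\) the weights are \(\lambda=4/19\) on the seven simple lines and \(\lambda=8/19\) on the six double lines. The total is \((28+48)/19=4=\dim H\), which is the balance condition \eqref{eq:log-CY-balance}. Each \(\lambda_j\) lies in \((0,1)\).

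For \eqref{eq:flat-integrability}, I would not list all flats by hand. Instead I would use Lemma~\ref{lem:wall-convergence}, run over the whole Weyl orbit. A flat \(X\subset H\) of the restricted arrangement is the trace on \(H\) of a flat \(\tilde X\) of the ambient \(D_5\) arrangement that contains \(H\cap\tilde X\). Its weight \(\lambda(X)\) is \(q\) times the number of positive roots, other than \(e_1-e_2\), that vanish on \(X\). These are the roots of the parabolic-type subsystem attached to \(X\) inside the \(D_5\) root system, minus one. Since \(W\) acts transitively on flats of each type, we may assume \(X\) is a standard parabolic flat \(J\ni i\). Its codimension in \(H\) is \(|J|-1\), and the required inequality \((N_J-1)q_2<|J|-1\) is exactly Lemma~\ref{lem:parabolic-density}. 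The reduction needs care on one point: a flat of the restricted arrangement might not be the restriction of a flat of minimal dimension, so the root count could change. To handle this, I take \(\tilde X\) to be the smallest ambient flat containing \(X\). Its codimension in \(V\) is then \(\operatorname{codim}_H X+1\), and the roots vanishing on \(X\) are exactly those vanishing on \(\tilde X\). As a sanity check, I would verify the worst cases directly. For the line where \(y=x_1=x_2=x_3\) up to signs, the ambient subsystem is \(A_4\), giving \(9\cdot4/19=36/19<3\). For \(y=0=x_1=x_2\), the subsystem is \(D_4\), giving \(11\cdot4/19=44/19<3\). For \(y=x_1=x_2\), the subsystem is \(A_3\), giving \(5\cdot4/19<2\).

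For the characteristic polynomial, I would identify the underlying simple arrangement: the hyperplanes \(y=0\), \(y=\pm x_j\) and \(x_i=\pm x_j\). This is not the \(B_4\) arrangement, because \(x_j=0\) is missing. It is the \(D_4\) arrangement in \((y,x_1,x_2,x_3)\) together with the single coordinate hyperplane \(y=0\). The arrangement \(D_4\cup\{y=0\}\) belongs to the classical family \(\mathcal A_n^k\) (type \(D_n\) plus \(k\) coordinate hyperplanes) of Zaslavsky and Józefiak–Sagan. For that family the characteristic polynomial is \((t-1)(t-3)\cdots(t-2n+5)\,(t-n+1)(t-2n+3+k)\) when \(k\) is small. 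For \(n=4\), \(k=1\) this gives \((t-1)(t-3)(t-4)(t-5)\). Since I am unsure of that formula, I would certify it independently by finite-field counting. The number of points of \(\mathbb F_p^4\) outside the arrangement equals the number of tuples with all \(x_i\neq\pm x_j\), with \(y\ne0\), and with \(y\ne\pm x_j\). I would split into two cases according to whether some \(x_j\) is zero, which happens for at most one \(j\). If no \(x_j\) is zero, choose the \(x\)'s from \((p-1)/2\) distinct square classes up to sign; this gives \(8\binom{(p-1)/2}{3}3!\) tuples, and \(y\) then has \(p-7\) choices. If exactly one \(x_j\) is zero, \(y\) has \(p-5\) choices. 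Summing should reproduce \((p-1)(p-3)(p-4)(p-5)\). Zaslavsky's theorem then gives the chamber count \(2\cdot4\cdot5\cdot6=240\).

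The main obstacle is making the integrability check rigorous through the reduction to ambient parabolic flats. The finite-field count is routine.
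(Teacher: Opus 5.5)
Your proposal is correct, but for the last two claims it takes a different route from the paper. The restriction computation, the multiplicities and the balance $28/19+48/19=4$ match the paper's. The paper then settles integrability and $\chi$ by an exact machine enumeration of the intersection lattice, which gives the maxima $8/19$, $20/19$, $44/19$ and the characteristic polynomial directly. You argue structurally instead.

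For integrability, the point you single out as the main obstacle is vacuous. Every flat of the restriction has the form $H\cap H_{\beta_1}\cap\cdots\cap H_{\beta_k}$, so it is already an ambient root flat. Hence $\tilde X=X$, $\operatorname{rank}\Phi_X=\operatorname{codim}_HX+1\in\{2,3,4\}$, and $\lambda(X)=\tfrac4{19}(|\Phi_X^+|-1)$. Conjugating $\Phi_X$ to a standard parabolic of the same rank then reduces the inequality to Lemma~\ref{lem:parabolic-density}. There is no need to make $e_1-e_2$ simple, since only the root count enters.

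This gives more than the enumeration does. It shows that Theorem~\ref{thm:shifted-sphere-kernel} applies to every simple-wall restriction of every irreducible type at $q_2$. Your weights and sanity checks also recover exactly the paper's maxima $8/19$, $20/19$, $44/19$.

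The finite-field count likewise replaces a computer check by a hand computation. The paper's enumeration, however, also produces the lattice and chamber data that are reused in Proposition~\ref{prop:D5-four-orbit-transfer}.

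Three small repairs are needed:
\begin{itemize}
\item The family formula you quote is misremembered: at $n=4$, $k=1$ it gives $(t-1)(t-3)^2(t-4)$. The correct formula for $D_n$ plus $k$ coordinate hyperplanes is $(t-1)(t-3)\cdots(t-2n+3)(t-n-k+1)$.
\item In your second case you must include the factor $3$ for the choice of the vanishing $x_j$, and $(p-1)(p-3)$ choices for the other two. The total is then $(p-1)(p-3)(p-5)\bigl[(p-7)+3\bigr]$ for all large primes $p$.
\item ``Square classes'' should read ``classes modulo sign''.
\end{itemize}
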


\begin{proof}
The positive roots are \(e_i-e_j\) and \(e_i+e_j\), \(i<j\).  After setting
\(e_1=e_2=y\), the root \(e_1-e_2\) vanishes, \(e_1+e_2\) contributes
\(2y\), each pair \(e_1\pm e_k,e_2\pm e_k\) contributes
\((y^2-x_{k-2}^2)^2\), and the remaining roots contribute the three
factors \(x_i^2-x_j^2\).  Exact intersection-lattice enumeration gives
maximal exponent sums
\[
 \frac8{19}<1,\qquad \frac{20}{19}<2,
 \qquad \frac{44}{19}<3
\]
in codimensions one, two, and three, respectively.  The same lattice
calculation gives \eqref{eq:D5-characteristic}; Zaslavsky's formula
\cite{Zaslavsky1975} gives
\(240\) chambers.
\end{proof}

\begin{lemma}[The Selberg chamber]
\label{lem:D5-Selberg-chamber}
The wall period opposite node one is
\begin{equation}\label{eq:D5-P1-Selberg}
 \boxed{
 \Per_1\!\left(\frac4{19}\right)
 =\frac{2^{15/19}}{24}\,\mathscr S_{19}.}
\end{equation}
Equivalently,
\begin{equation}\label{eq:D5-P1-gamma}
\boxed{
\begin{aligned}
 \Per_1\!\left(\frac4{19}\right)
 ={}&\frac{2^{15/19}}{24}\\
 &\times
 \frac{
 \Gamma(1/2)\Gamma(15/38)\Gamma(11/38)
 \Gamma(11/19)\Gamma(9/19)\Gamma(7/19)
 \Gamma(15/19)\Gamma(13/19)
 }{
 \Gamma(33/38)\Gamma(29/38)\Gamma(25/38)
 \Gamma(17/19)^2}.
\end{aligned}}
\end{equation}
\end{lemma}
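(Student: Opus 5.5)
The plan is to evaluate \(\Per_1(4/19)\) directly. I will use the projective invariance of Lemma~\ref{lem:affine-projective-spherical} to replace the simplex section \(\sum_{j\ne1}u_j=1\) by the more convenient section \(y=1\) of the same cone. I will then unfold the resulting chamber integral into a Selberg integral by squaring coordinates. The gamma form \eqref{eq:D5-P1-gamma} should then follow from Selberg's formula.

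\emph{Step 1 (coordinates on the wall).} In the standard \(D_5\) realization \(\beta_1=e_1-e_2\), \(\beta_2=e_2-e_3\), \(\beta_3=e_3-e_4\), \(\beta_4=e_4-e_5\), \(\beta_5=e_4+e_5\), the wall is \(H_1=\{e_1=e_2=y\}\), with coordinates \((y,x_1,x_2,x_3)\) as in Proposition~\ref{prop:D5-restricted-arrangement}.

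\begin{itemize}
\item The simple-coroot coordinates are \(u_2=y-x_1\), \(u_3=x_1-x_2\), \(u_4=x_2-x_3\), \(u_5=x_2+x_3\).
\item The linear map \((y,x)\mapsto u\) has determinant \(1\cdot\det\begin{pmatrix}1&-1&0\\0&1&-1\\0&1&1\end{pmatrix}=2\).
\item Under this map \(Q_1(u)=Q_{D_5}(y,x)\), and the positive chamber becomes \(y>x_1>x_2>|x_3|\).
\end{itemize}

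The affine lattice measure on \(\Delta_1\) is the contraction \(\iota_E(du_2\wedge\cdots\wedge du_5)\) restricted to the section. This equals \(2\,\iota_E(dy\wedge dx_1\wedge dx_2\wedge dx_3)\). The integrand \(Q^{-4/19}\) together with this \(3\)-form is dilation invariant, because \(4\cdot 19/19=4\) matches the dimension. Hence the radial identification in the proof of Lemma~\ref{lem:affine-projective-spherical} moves the integral to the section \(y=1\), where the form restricts to \(dx_1\,dx_2\,dx_3\). The result is
\[
 \Per_1(4/19)=2\int_{1>x_1>x_2>|x_3|}
 \Bigl(2\prod_j(1-x_j^2)^2\prod_{i<j}(x_i^2-x_j^2)\Bigr)^{-4/19}dx .
\]

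\emph{Step 2 (Selberg unfolding).} I substitute \(s_j=x_j^2\).

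\begin{itemize}
\item The two signs of \(x_3\) contribute a factor \(2\); the other coordinates are positive.
\item Each \(dx_j\) becomes \(\tfrac12 s_j^{-1/2}ds_j\), which gives the parameter \(a=\tfrac12\).
\item \((1-s_j)^{-8/19}\) gives \(b=11/19\), and \(|s_i-s_j|^{-4/19}\) gives \(2c=-4/19\).
\item The ordered region \(1>s_1>s_2>s_3>0\) is one sixth of the cube by symmetry of the integrand.
\end{itemize}

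Collecting the constants gives \(2\cdot2^{-4/19}\cdot2\cdot2^{-3}\cdot\tfrac16=2^{15/19}/24\), which is \eqref{eq:D5-P1-Selberg}.

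\emph{Step 3 (gamma form).} Selberg's formula gives
\[
 S_3(a,b,c)=\prod_{j=0}^{2}\frac{\Gamma(a+jc)\Gamma(b+jc)\Gamma(1+(j+1)c)}{\Gamma(a+b+(n+j-1)c)\Gamma(1+c)},\qquad n=3.
\]
It applies here because \(c>-1/3\), \(a>0\) and \(b>0\). Substituting \(a=1/2\), \(b=11/19\), \(c=-2/19\) produces:

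\begin{itemize}
\item \(\Gamma(1/2)\Gamma(15/38)\Gamma(11/38)\) from \(\Gamma(a+jc)\);
\item \(\Gamma(11/19)\Gamma(9/19)\Gamma(7/19)\) from \(\Gamma(b+jc)\);
\item \(\Gamma(17/19)\Gamma(15/19)\Gamma(13/19)\) from \(\Gamma(1+(j+1)c)\);
\item the denominators \(\Gamma(33/38)\Gamma(29/38)\Gamma(25/38)\) and \(\Gamma(17/19)^3\).
\end{itemize}

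One factor \(\Gamma(17/19)\) cancels, which yields \eqref{eq:D5-P1-gamma}. Convergence is consistent with Lemma~\ref{lem:wall-convergence}.

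The main obstacle is normalization bookkeeping rather than analysis. The delicate points are the factor \(2\) from the coroot-coordinate determinant, the orientation and section change under the contracted form, the \(x_3\) sign doubling, and the \(1/3!\) symmetrization. An error in any of these would change the power of \(2\). I would cross-check the total constant numerically, or against \eqref{eq:wall-inverse-jacobian} using \(\delta_1\).
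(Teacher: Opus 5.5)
Your proposal is correct and is essentially the paper's proof. The paper's substitution $u_j=(\cdots)/(1+t_2)$, with Jacobian $2(1+t_2)^{-4}$ and all powers of $1+t_2$ cancelling because $19\cdot(4/19)=4$, is your radial transfer from the simplex to the section $y=1$ written out in coordinates; both arguments then use $s_j=t_j^2$, the $x_3$ sign doubling, the $1/3!$ unfolding and Selberg's formula, and your constant $2\cdot2^{-4/19}\cdot2\cdot2^{-3}/6=2^{-23/19}/6=2^{15/19}/24$ is right.

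The only slip is the Selberg hypothesis: the condition is $c>-\min(1/n,\,a/(n-1),\,b/(n-1))$, which here means $c>-1/4$ rather than $c>-1/3$, and $c=-2/19$ still satisfies it.
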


\begin{proof}
On the node-one wall, make the exact substitution
\[
 u_2=\frac{1-t_1}{1+t_2},\quad
 u_3=\frac{t_1-t_2}{1+t_2},\quad
 u_4=\frac{t_2-t_3}{1+t_2},\quad
 u_5=\frac{t_2+t_3}{1+t_2}.
\]
The simplex becomes \(1>t_1>t_2>|t_3|\), the Jacobian has absolute value
\(2(1+t_2)^{-4}\), and
\[
 Q_1=\frac{2}{(1+t_2)^{19}}
 \prod_{j=1}^{3}(1-t_j^2)^2
 \prod_{i<j}(t_i^2-t_j^2).
\]
The equality \(19(4/19)=4\) cancels every power of \(1+t_2\).  After
\(s_j=t_j^2\), evenness in \(t_3\), and unfolding the ordered region, one
obtains \eqref{eq:D5-P1-Selberg}.  The classical Selberg product formula
\cite{Selberg1944} (see also \cite[\S5.14]{DLMF}) gives \eqref{eq:D5-P1-gamma}.
\end{proof}

\begin{proposition}[The four chamber orbits]
\label{prop:D5-four-orbit-transfer}
The \(240\) chambers split under the weighted-arrangement symmetry group into
four orbits of sizes
\[
 48,\qquad48,\qquad48,\qquad96,
\]
classified by whether \(|y|\) is the largest, second, third, or smallest of
the four absolute coordinate values.  These orbits carry, respectively,
\[
 \Per_1,\qquad\Per_2,\qquad\Per_3,
 \qquad\Per_4=\Per_5.
\]
Let \(t=2\cos(2\pi/19)\).  Then
\begin{equation}\label{eq:D5-transfer-vector}
 \boxed{
 \Per_2=r_2\Per_1,\qquad
 \Per_3=r_3\Per_1,\qquad
 \Per_4=\Per_5=r_4\Per_1,}
\end{equation}
where
\begin{align*}
 r_2={}&-1+3t+10t^2-4t^3-15t^4+t^5+7t^6-t^8,\\
 r_3={}&t-t^2-t^3+6t^4-5t^6+t^8,\\
 r_4={}&-1-7t+11t^3-6t^5+t^7.
\end{align*}
Equivalently,
\begin{equation}\label{eq:D5-transfer-sine}
 \boxed{
 r_2=\frac{\nu_3\nu_8}{\nu_2\nu_4},\qquad
 r_3=\frac{\nu_3\nu_5\nu_8}{\nu_2^2\nu_9},\qquad
 r_4=\frac{\nu_3\nu_7\nu_8}{\nu_2\nu_6\nu_9}.}
\end{equation}
\end{proposition}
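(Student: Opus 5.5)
The plan mirrors the $F_4$ argument. We combine the orbitwise tiling of Theorem~\ref{thm:orbit-tiling} with the null-vector relation of Theorem~\ref{thm:shifted-sphere-kernel}, applied to the restricted arrangement of Proposition~\ref{prop:D5-restricted-arrangement}.

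\textbf{Step 1: symmetry group and the four orbits.} The coordinates $(y,x_1,x_2,x_3)$ on $H=\{e_1=e_2\}$ are used throughout. The weighted arrangement defined by \eqref{eq:D5-restricted-polynomial} consists of:
\begin{itemize}
\item the line $y=0$ (weight $4/19$);
\item the lines $x_j=\pm y$ (weight $8/19$ each);
\item the lines $x_i=\pm x_j$ (weight $4/19$ each).
\end{itemize}
It is invariant under the group $\Gamma$ generated by three moves: sign changes of all four coordinates, permutations of $x_1,x_2,x_3$, and $y\mapsto-y$. This group has order $2\cdot 8\cdot 6=96$ and preserves $|Q_{D_5}|$ and the spherical measure.

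A chamber is determined by the signs of $y$ and the $x_j$ together with the total order of $|y|,|x_1|,|x_2|,|x_3|$. The $x_j=0$ walls are absent, so the signs of $x_j$ can change across a chamber. For this reason I will instead classify chambers by the position of $|y|$ among the absolute values. Counting with the characteristic polynomial \eqref{eq:D5-characteristic} ($240$ chambers) should produce the orbit sizes $48,48,48,96$. The count must be checked carefully, because in the smallest-$|y|$ case the sign of $y$ relative to the $x$'s splits or merges chambers differently.

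Next, each chamber has to be matched with a marked wall type. I will locate the positive chamber of each $\Per_i$ inside $H_{\alpha}$ using Theorem~\ref{thm:orbit-tiling}; all simple coroots of $D_5$ lie in one orbit, so $J_{\mathcal O}=|W_\alpha|\sum_i\Per_i$. I then compute which ordering class contains the image of $\mathcal S_i$ under the Weyl element sending $\beta_i$ to $e_1-e_2$. The substitution in Lemma~\ref{lem:D5-Selberg-chamber} already exhibits $\Per_1$ as the region $1>t_1>t_2>|t_3|$, in which $|y|$ is largest. The two spin nodes $4,5$ are exchanged by the diagram automorphism, which gives $\Per_4=\Per_5$ and accounts for the orbit of size $96$. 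Lemma~\ref{lem:affine-projective-spherical} ensures that each chamber period equals the corresponding $\Per_i$ with the same normalization. One factor, $\delta_i$ or a $2$, may need adjustment here.

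\textbf{Step 2: reduce to a $4\times4$ system.} Theorem~\ref{thm:shifted-sphere-kernel} applies by Proposition~\ref{prop:D5-restricted-arrangement}: the weights lie in $(0,1)$, they sum to $4$, and the flat inequalities hold. It gives $V(a)P=0$ with $a_H=e^{\pi i\lambda_H}$, so every weight is a power of $u=e^{\pi i/19}$ (exponents $4$ and $8$). Since $P$ is $\Gamma$-invariant, each equation for a row chamber $C$ becomes $\sum_{\mathcal O}M_{C,\mathcal O}(u)P_{\mathcal O}=0$, where
\[
M_{C,\mathcal O}(u)=\sum_{D\in\mathcal O}u^{e(C,D)}
\]
and $e(C,D)$ is the weighted separation exponent. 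Because $\Gamma$ acts by symmetries, it suffices to take one representative row from each orbit. This yields an explicit $4\times4$ matrix of polynomials in $u$.

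\textbf{Step 3: solve modulo $\Phi_{38}(u)$.} The entries only involve even powers of $u$, so they can be reduced modulo $\Phi_{38}$, or equivalently worked in $\mathbb Q(\zeta_{19})$ with $t=2\cos(2\pi/19)$. I would then do the following:
\begin{itemize}
\item verify that the $3\times3$ minors do not all vanish, so that the kernel is one-dimensional;
\item compute the kernel vector normalized so that its $\Per_1$ entry is $1$;
\item express the remaining entries in the real subfield as the polynomials $r_2,r_3,r_4$;
\item confirm the sine-product forms \eqref{eq:D5-transfer-sine} by exact reduction of $\nu_k$ in $\mathbb Q(t)$.
\end{itemize}
Since the periods are positive, the kernel is necessarily real-proportional to $P$, which forces the stated ratios.

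\textbf{Main obstacle.} I expect the difficulty to lie in Steps 1–2, not the algebra in Step 3. The hard part is enumerating the $240$ chambers and their separating sets correctly, and matching each ordering class to the right marked wall with the right multiplicity and normalization. A mislabelled orbit would permute the $r_k$ without producing any algebraic inconsistency. As a safeguard, I would first test the matching numerically: $\Per_1$ is known in closed form from Lemma~\ref{lem:D5-Selberg-chamber}, and one further wall period can be evaluated by quadrature to check the sign and size of the predicted ratios.
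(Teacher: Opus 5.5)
Your proposal is correct and is essentially the paper's proof. Both arguments use the same three ingredients: the $\Gamma$-invariant $4\times4$ orbit row-sum block of the Varchenko matrix with weights $e^{4\pi i m/19}$, rank three certified by a nonzero $3\times3$ minor over $\Q(\zeta_{19})$, and Theorem~\ref{thm:shifted-sphere-kernel} placing the positive period vector in that one-dimensional kernel.

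The two loose ends you flag both resolve. The normalization factor relating chamber periods to $\Per_i$ is the same for every $i$, because the Weyl group preserves the lattice measure on $H_\alpha$ (Theorem~\ref{thm:orbit-tiling}), so it cannot change the ratios. The count $3\cdot48+96$ holds because a chamber leaves the sign of $x_j$ free exactly when $|x_j|$ is the smallest of the four absolute values, and this never happens when $|y|$ is smallest.
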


\begin{proof}
Set \(z=e^{2\pi i/19}\).  A hyperplane of Ziegler multiplicity \(m\) has
Varchenko weight \(z^{2m}\).  Exact separation-profile enumeration gives
a four-by-four orbit row-sum block whose determinant is divisible by
\(\Phi_{19}(z)\) exactly once, while a three-by-three minor is nonzero modulo
\(\Phi_{19}\).  Thus its invariant kernel is one-dimensional.  Reduction in
\(\mathbf Q[z]/(\Phi_{19})\) gives the kernel vector
\((1,r_2,r_3,r_4)\); every row residual is zero.  Appendix~\ref{app:d5-certificate} records the block and the exact cyclotomic reductions.  Theorem
\ref{thm:shifted-sphere-kernel} puts the positive period vector in this
kernel, proving \eqref{eq:D5-transfer-vector}.  The identities
\eqref{eq:D5-transfer-sine} follow by exact reduction in the real cyclotomic
field.
\end{proof}

\begin{theorem}[The \(D_5\) wall sum and second residue]
\label{thm:D5-closed-evaluation}
The complete marked wall sum is
\begin{equation}\label{eq:D5-wall-sum}
 \boxed{
 \sum_{i=1}^{5}\Per_i\!\left(\frac4{19}\right)
 =\frac{2^{34/19}}{3}
 \frac{
 \sin(2\pi/19)\sin^2(4\pi/19)\sin(6\pi/19)\sin(8\pi/19)
 }{\sin(\pi/19)}
 \mathscr S_{19}.}
\end{equation}
Consequently,
\begin{equation}\label{eq:D5-normalized-residue}
 \boxed{
 \Res_{s=4/19}\xi_{D_5}(s)
 =\frac{2^{34/19}}{57}\,
 \zeta_{\mathrm R}\!\left(\frac4{19}\right)
 \frac{
 \sin(2\pi/19)\sin^2(4\pi/19)\sin(6\pi/19)\sin(8\pi/19)
 }{\sin(\pi/19)}
 \mathscr S_{19}.}
\end{equation}
For the ordinary Witten zeta function,
\begin{equation}\label{eq:D5-ordinary-residue}
 \boxed{
 \Res_{s=4/19}\zeta_{D_5}(s)
 =87091200^{4/19}\Res_{s=4/19}\xi_{D_5}(s).}
\end{equation}
The wall sum also equals
\begin{equation}\label{eq:D5-wall-sum-pure-gamma}
\boxed{
\begin{aligned}
 \sum_{i=1}^{5}\Per_i\!\left(\frac4{19}\right)
 ={}&\frac{2^{34/19}}{3}
 \frac{
 \begin{gathered}
 \Gamma(1/2)^9\Gamma(1/19)\Gamma(18/19)
 \Gamma(15/38)\Gamma(11/38)\\[-1mm]
 {}\times\Gamma(9/19)\Gamma(7/19)
 \end{gathered}
 }{
 \begin{gathered}
 \Gamma(2/19)\Gamma(4/19)^2\Gamma(6/19)\Gamma(8/19)
 \Gamma(15/19)\\[-1mm]
 {}\times\Gamma(17/19)^3
 \Gamma(33/38)\Gamma(29/38)\Gamma(25/38)
 \end{gathered}}.
\end{aligned}}
\end{equation}
\end{theorem}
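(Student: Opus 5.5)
We combine Lemma~\ref{lem:D5-Selberg-chamber} with Proposition~\ref{prop:D5-four-orbit-transfer}, then reduce the cyclotomic transfer coefficients to sine products.

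\textbf{Step 1: the wall sum as a multiple of \(\Per_1\).} By \eqref{eq:D5-transfer-vector},
\[
 \sum_{i=1}^5\Per_i\!\left(\tfrac4{19}\right)=(1+r_2+r_3+2r_4)\,\Per_1 .
\]
Using the sine forms \eqref{eq:D5-transfer-sine}, the factor \(1+r_2+r_3+2r_4\) is an element of the real cyclotomic field \(\Q(\cos(2\pi/19))\). The core claim to verify is
\[
 \frac{2^{15/19}}{24}\,(1+r_2+r_3+2r_4)
 =\frac{2^{34/19}}{3}\,
 \frac{\sin(2\pi/19)\sin^2(4\pi/19)\sin(6\pi/19)\sin(8\pi/19)}{\sin(\pi/19)} .
\]
Equivalently,
\[
 1+r_2+r_3+2r_4=16\,\sin^4(\pi/19)\,\nu_2\nu_4^2\nu_6\nu_8 .
\]
I would prove this by exact reduction in \(\Q[t]/(\text{min poly of }2\cos(2\pi/19))\). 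Write \(\sin^2(\pi/19)=(2-t)/4\), and express each \(\nu_k\) through Chebyshev polynomials in \(\tau=2\cos(\pi/19)\), with \(t=\tau^2-2\). Then check that the difference reduces to zero modulo the degree-9 minimal polynomial.

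This is a finite identity, of the same kind as the certificates in Appendix~\ref{app:d5-certificate}. It is the main obstacle: it needs the exact polynomial arithmetic, and it is where a normalization slip (for example the \(2^{19/19}\) bookkeeping) would show up. I would first test it numerically to high precision to confirm the constant, then do the exact reduction. Substituting \eqref{eq:D5-P1-Selberg} then gives \eqref{eq:D5-wall-sum}.

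\textbf{Step 2: the residues.} Insert \eqref{eq:D5-wall-sum} into \eqref{eq:universal-residue} with \(N-1=19\). This gives \(\frac{1}{3\cdot19}=\frac1{57}\) and hence \eqref{eq:D5-normalized-residue}. For \eqref{eq:D5-ordinary-residue}, use \(K_{D_5}=\prod e_j!\) with exponents \(1,3,4,5,7\):
\[
 K_{D_5}=1\cdot6\cdot24\cdot120\cdot5040=87091200 ,
\]
and apply Theorem~\ref{thm:universal-second-pole}.

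\textbf{Step 3: the pure gamma form.} Start from \eqref{eq:D5-P1-gamma} times the sine product and convert each sine by Euler reflection,
\[
 \sin(k\pi/19)=\frac{\pi}{\Gamma(k/19)\Gamma(1-k/19)} .
\]
The factors \(\pi\) are written as \(\Gamma(1/2)^2\).
\begin{itemize}
\item The numerator has five sines and the denominator one, so we get \(\pi^4=\Gamma(1/2)^8\). Together with the existing \(\Gamma(1/2)\) this accounts for \(\Gamma(1/2)^9\).
\item The denominator \(\sin(\pi/19)\) produces \(\Gamma(1/19)\Gamma(18/19)\) in the numerator.
\item The numerator sines place \(\Gamma(2/19)\Gamma(4/19)^2\Gamma(6/19)\Gamma(8/19)\) together with \(\Gamma(17/19)\Gamma(15/19)^2\Gamma(13/19)\Gamma(11/19)\) in the denominator.
\end{itemize}
Then cancel against \(\Gamma(11/19)\Gamma(13/19)\Gamma(15/19)\) in the numerator of \eqref{eq:D5-P1-gamma}. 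This leaves exactly the displayed \(\Gamma(15/19)\Gamma(17/19)^3\) in the denominator, and \(\Gamma(9/19)\Gamma(7/19)\Gamma(15/38)\Gamma(11/38)\) survive in the numerator. This is a routine bookkeeping check that confirms \eqref{eq:D5-wall-sum-pure-gamma}.
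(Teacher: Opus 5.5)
Your proposal is correct and is essentially the paper's own proof. You write the wall sum as \((1+r_2+r_3+2r_4)\Per_1\), check by exact real-cyclotomic reduction that \(1+r_2+r_3+2r_4=16\sin(2\pi/19)\sin^2(4\pi/19)\sin(6\pi/19)\sin(8\pi/19)/\sin(\pi/19)\), then insert the Selberg value of \(\Per_1\), the universal residue formula with \(N-1=19\), and \(K_{D_5}=87091200\), and for the pure gamma form cancel \(\Gamma(11/19)\Gamma(13/19)\Gamma(15/19)\) after Euler reflection and write \(\pi^4=\Gamma(1/2)^8\) — exactly as the paper does (the reduction is legitimate in either presentation, since \(2\cos(\pi/19)\) already lies in \(\Q(\cos(2\pi/19))\)).
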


\begin{proof}
From Proposition~\ref{prop:D5-four-orbit-transfer},
\[
 \sum_i\Per_i=(1+r_2+r_3+2r_4)\Per_1.
\]
Exact real-cyclotomic reduction gives
\[
 1+r_2+r_3+2r_4
 =16\frac{
 \sin(2\pi/19)\sin^2(4\pi/19)\sin(6\pi/19)\sin(8\pi/19)
 }{\sin(\pi/19)}.
\]
Combining this with Lemma~\ref{lem:D5-Selberg-chamber} proves
\eqref{eq:D5-wall-sum}.  The universal residue formula gives
\eqref{eq:D5-normalized-residue}, and
\(K_{D_5}=1!3!4!5!7!=87091200\) gives
\eqref{eq:D5-ordinary-residue}.  Euler reflection applied to the sine quotient
and cancellation with the Selberg product gives
\eqref{eq:D5-wall-sum-pure-gamma}.
\end{proof}

\begin{remark}[Numerical check]
The exact formulas give
\[
 \sum_i\Per_i=44.6575416286740550047363242201382790718491\ldots,
\]
\[
 \Res_{s=4/19}\xi_{D_5}(s)
 =-1.7622078597811961513166961802606550931762\ldots,
\]
and
\[
 \Res_{s=4/19}\zeta_{D_5}(s)
 =-82.7238360922512289398945232410515919135951\ldots.
\]
Direct high-precision quadrature agrees with these values.  The decimal comparisons are checks only and are not used in the proof.
\end{remark}

\section{Reducible semisimple products}
\label{sec:products}

For a meromorphic function \(f\), let \(v_x(f)\) be its valuation at \(x\):
positive for a zero and negative for a pole.  If a semisimple Lie algebra has
simple factors with Witten zeta functions \(f_a\), then
\begin{equation}\label{eq:valuation-product}
 \boxed{
 v_x\!\left(\prod_a f_a\right)=\sum_a v_x(f_a).}
\end{equation}
A product has a pole at \(x\) exactly when this sum is negative, with order
the negative of the sum.  Laurent coefficients are obtained by ordinary
convolution of the factor Laurent series.

This is the correct universal rule for reducible systems.  Leading poles of
different factors can coincide and add in order; a leading pole of one factor
can coincide with a later pole of another; and away from the rightmost pole,
a zero can cancel a pole.  Consequently no formula depending only on the
total rank and total number of positive roots can describe the second pole
of every semisimple product.

\section{Conclusion}
\label{sec:conclusion}

For every irreducible reduced crystallographic root system of rank at least
two, the first distinct pole below the leading pole is
\[
 q_2(\Phi)=\frac{r-1}{N-1}.
\]
It is simple, it is carried exactly by the simple walls, and
\[
 \operatorname*{Res}_{s=q_2}\xi_\Phi(s)
 =\frac{\zeta_{\mathrm R}(q_2)}{N-1}
  \sum_i\mathcal P_i(q_2)<0.
\]
The theorem reduces the location and order of this pole to a strict comparison
of parabolic root counts.  Its residue is a finite positive sum of explicitly
marked projective periods.

The Stokes relation of Section~\ref{sec:chamber-kernel} provides a method for
evaluating such periods when the chambers form a small number of symmetry
orbits.  In type \(F_4\), it transfers a Dixon-evaluable chamber to the second
orbit and yields the gamma product in
Theorem~\ref{thm:F4-closed-evaluation}.  In type \(D_5\), it transfers a
Selberg-evaluable chamber across four orbits and yields the complete wall sum
in Theorem~\ref{thm:D5-closed-evaluation}.

Two natural problems remain.  The first is to evaluate the corresponding wall
sums in types \(E_6\), \(E_7\), and \(E_8\).  The second is to determine which
weighted restrictions admit a chamber-orbit relation strong enough to reduce
all periods to classical beta, Dixon, or Selberg integrals.

\paragraph{Verification.}
The accompanying scripts check the root-count formulas, the normalization
constants, the finite chamber decompositions, the cyclotomic kernel
relations, and the gamma identities.  Floating-point calculations are used
only for independent numerical checks.

\appendix\appendix

\section{Cyclotomic verification for the \texorpdfstring{\(F_4\)}{F4} chamber relation}
\label{app:f4-certificate}

This appendix prints the finite data used in
\eqref{eq:F4-kernel-congruences}.  Let
\[
 A(u)=\sum_ea_eu^e,\qquad B(u)=\sum_eb_eu^e,\qquad
 C(u)=\sum_ec_eu^e.
\]
The nonzero coefficients are:
\begin{center}
\small
\begin{tabular}{@{}r|rrr@{\qquad}r|rrr@{}}
\toprule
\(e\)&\(a_e\)&\(b_e\)&\(c_e\)&\(e\)&\(a_e\)&\(b_e\)&\(c_e\)\\
\midrule
 0&1&0&1&36&2&3&4\\
 3&1&0&2&39&5&2&5\\
 6&0&1&2&42&4&2&5\\
 9&1&2&1&45&2&3&3\\
 12&2&2&0&48&2&4&1\\
 15&2&2&0&51&2&3&0\\
 18&2&3&0&54&2&2&0\\
 21&2&4&1&57&2&2&0\\
 24&2&3&3&60&1&2&1\\
 27&4&2&5&63&0&1&2\\
 30&5&2&5&66&1&0&2\\
 33&2&3&4&69&1&0&1\\
\bottomrule
\end{tabular}
\end{center}
Thus \(A(1)=B(1)=C(1)=48\), as required by the two size-48 chamber
orbits.  Put
\[
 \Phi_{46}(u)=u^{22}-u^{21}+u^{20}-\cdots-u+1,
 \qquad \tau=u+u^{-1}.
\]
Direct polynomial division gives
\[
 \remd_{\Phi_{46}}\bigl(A(u)r_{23}(\tau)+B(u)\bigr)=0,
 \qquad
 \remd_{\Phi_{46}}\bigl(B(u)r_{23}(\tau)+C(u)\bigr)=0.
\]
Moreover,
\begin{align*}
 \remd_{\Phi_{46}}A(u)={}&
 3u^{20}-6u^{19}+4u^{18}-2u^{17}-3u^{16}+3u^{14}\\
 &-4u^{13}+4u^{12}-u^9+4u^8-7u^7+2u^6\\
 &-2u^4-u^3+4u^2-4u+2.
\end{align*}
Hence \(A\not\equiv0\pmod{\Phi_{46}}\).  Finally,
\[
 AC-B^2=\Phi_{46}Q,
\]
and
\begin{align*}
 \remd_{\Phi_{46}}Q
 =3(&7u^{21}+6u^{20}+12u^{19}+4u^{18}+15u^{17}+9u^{16}
 +11u^{15}+9u^{14}\\
 &+15u^{13}+5u^{12}+15u^{11}+9u^{10}+11u^9+9u^8
 +15u^7+4u^6\\
 &+12u^5+6u^4+7u^3+5u-5)\ne0.
\end{align*}
Hence \(\Phi_{46}\) occurs with multiplicity exactly one in the invariant
determinant, and the invariant kernel is one-dimensional.

\subsection*{Gamma identities used in the \(F_4\) evaluation}
For an integer \(j\), define the exact duplication and triplication blocks
\[
 D_j=
 \frac{\Gamma(j/138)\Gamma((j+69)/138)}{\Gamma(j/69)}
 =2^{1-j/69}\sqrt\pi,
\]
\[
 T_j=
 \frac{\Gamma(j/138)\Gamma((j+46)/138)\Gamma((j+92)/138)}
 {\Gamma(j/46)}
 =2\pi\,3^{1/2-j/46}.
\]
After Euler reflection, the ratio between the compact Dixon expression
\eqref{eq:E23-compact-gamma} transferred by
\eqref{eq:E23-orbit-transfer} and the pure-gamma expression
\eqref{eq:E23-pure-gamma} reduces to the external factor
\[
 2^{-20/23}3^{-1/23}\pi^{-1}
\]
times
\[
 D_{27}D_{48}D_{60}D_{57}^{-1}T_{11}T_{13}^{-1}
 =2^{20/23}3^{1/23}\pi.
\]
The product is one.  This verifies the final gamma simplification.

\section{Cyclotomic verification for the \texorpdfstring{\(D_5\)}{D5} chamber relation}
\label{app:d5-certificate}

Let \(V_{ij}(z)\) be the row-sum polynomial from a chamber in the
\(i\)-th weighted-symmetry orbit to the \(j\)-th orbit, ordered as in
Proposition~\ref{prop:D5-four-orbit-transfer}.  Put
\[
 G(z)=(1+z^2)^3(1+z^4),\qquad H(z)=1+z^2+z^4.
\]
The ten independent entries factor as follows:
\begingroup
\small
\begin{align*}
V_{11}={}&GH(z^{24}-z^{22}+z^{20}-z^{18}+z^{16}-z^{14}+z^{12}
-z^{10}+z^8-z^6+z^4-z^2+1),\\
V_{12}={}&z^4GH(z^4-z^2+1)(z^{12}-z^6+1),\\
V_{13}={}&z^8GH(z^8-z^6+z^4-z^2+1),\\
V_{14}={}&2z^{12}GH,\\
V_{22}={}&G(z^{28}-z^{26}+z^{20}+z^{16}-z^{14}+z^{12}+z^8-z^2+1),\\
V_{23}={}&z^4G(z^{20}-z^{16}+z^{12}+z^{10}+z^8-z^4+1),\\
V_{24}={}&2z^8GH(z^8-z^4+1),\\
V_{33}={}&G(z^8-z^4+1)(z^{20}-2z^{18}+3z^{16}-4z^{14}+5z^{12}
-3z^{10}+5z^8-4z^6+3z^4-2z^2+1),\\
V_{34}={}&2z^4GH(z^8-z^4+1)(z^8-z^6+z^4-z^2+1),\\
V_{44}={}&G(z^8+1)H(z^8-z^4+1)(z^8-z^6+z^4-z^2+1).
\end{align*}
\endgroup
For \(1\le i,j\le3\), one has \(V_{ji}=V_{ij}\).  Since the fourth orbit
has size 96 and the first three have size 48, double counting gives
\(V_{4i}=V_{i4}/2\) for \(i=1,2,3\).  At \(z=1\), the row sums are therefore
exactly the target-orbit sizes.

Let \(r=(1,r_2,r_3,r_4)^T\), with \(r_j\) as in
Proposition~\ref{prop:D5-four-orbit-transfer}.  Exact division gives
\[
 \remd_{\Phi_{19}}(Vr)=0
\]
componentwise.  For the principal \(3\times3\) minor,
\begin{align*}
 \remd_{\Phi_{19}}\det V_{\{1,2,3\},\{1,2,3\}}={}&
 1620z^{17}-512z^{16}+562z^{15}-1302z^{14}-728z^{13}\\
 &-1992z^{12}-1768z^{11}-2214z^{10}-2214z^9-1768z^8\\
 &-1992z^7-728z^6-1302z^5+562z^4-512z^3+1620z^2+2028,
\end{align*}
which is nonzero.  Thus the invariant block has rank exactly three over
\(\Q(\zeta_{19})\).  The same exact replay reconstructs the 83 flats,
\(\chi(T)=(T-1)(T-3)(T-4)(T-5)\), all 240 chambers, and the orbit sizes
\(48,48,48,96\).

\section{Low-rank normalization checks}
\label{sec:low-rank}

This appendix is not used to prove Theorem~\ref{thm:universal-second-pole}.  It records normalization checks against known rank-two and rank-three formulas, followed by several explicit examples obtained from classical beta and Dixon summations.  The table uses the normalized Witten zeta function.
\begin{center}
\renewcommand{\arraystretch}{1.25}
\begin{tabularx}{\textwidth}{@{}c c X@{}}
\toprule
Type & \(q_2\) & \(\displaystyle\sum_i\Per_i(q_2)\)\\
\midrule
\(A_2\)&\(1/2\)&\(2\)\\
\(A_3\)&\(2/5\)&\(\sqrt5\,B(1/5,1/5)\)\\
\(A_4\)&\(1/3\)&\(3\Gamma(1/3)^4/\Gamma(2/3)^2\)\\
\(B_3\)&\(1/4\)&\(\dfrac{2^{1/4}+\csc(\pi/8)}{4\sqrt\pi}
 \Gamma(1/8)\Gamma(3/8)\)\\
\(D_4\)&\(3/11\)&\(3\Per_O+\Per_C\), with \(\Per_O,\Per_C\) below\\
\bottomrule
\end{tabularx}
\end{center}
For \(A_2\), the two wall simplices are points and each period equals
\(1\).  Hence the table entry is independent of the published answer and
\[
 \Res_{s=1/2}\xi_{A_2}(s)
 =\frac{\zeta_{\mathrm R}(1/2)}{2}\sum_i\Per_i(1/2)
 =\frac{\zeta_{\mathrm R}(1/2)}{2}\,2
 =\zeta_{\mathrm R}(1/2),
\]
in exact agreement with Au's Corollary~5.2 and Example~5.3
\cite{Au2024}.  The other rank-two public checks are independent as
well.  In types \(B_2=C_2\) and \(G_2\), the two zero-dimensional face
products are respectively \((2,1)\) and \((2,18)\), so the theorem gives
\[
 \Res_{s=1/3}\xi_{B_2}(s)
 =\frac{\zeta_{\mathrm R}(1/3)}{3}
   \bigl(1+2^{-1/3}\bigr),
 \qquad
 \Res_{s=1/5}\xi_{G_2}(s)
 =\frac{\zeta_{\mathrm R}(1/5)}{5}
   \bigl(2^{-1/5}+18^{-1/5}\bigr),
\]
again exactly as in Au's Corollary~5.2.  The \(A_3\) identity was proved in
\eqref{eq:A3-aggregate}, and its second residue agrees with
\cite[Theorem~9.2]{Au2024}.  Likewise the \(B_3\) entry reproduces
\cite[Theorem~9.4]{Au2024} after division by \(N-1=8\).  Au's
Theorem~9.6 also publishes the \(C_3\) second residue; since an independent
\(C_3\) wall-sum reduction is not included in this manuscript, it is recorded
as prior art rather than used here as a confirmation.

For \(D_4\), there are three outer walls and one central wall.  Direct beta integration
followed by Dixon's \({}_3F_2(1)\) summation gives
\begin{equation}\label{eq:D4-outer}
 \boxed{
 \Per_O=2^{-14/11}\sqrt\pi\,
 \frac{\Gamma(4/11)\Gamma(5/11)\Gamma(8/11)\Gamma(7/22)}
 {\Gamma(19/22)\Gamma(9/11)\Gamma(15/22)},}
\end{equation}
\begin{equation}\label{eq:D4-central}
 \boxed{
 \Per_C=2^{-14/11}\sqrt\pi\,
 \frac{\Gamma(3/22)\Gamma(5/11)\Gamma(7/22)^2}
 {\Gamma(3/11)\Gamma(7/11)\Gamma(9/11)}.}
\end{equation}
Euler reflection yields
\begin{equation}\label{eq:D4-ratio}
 \frac{\Per_C}{\Per_O}
 =\frac{\sin(3\pi/11)\sin(4\pi/11)}
 {\sin(3\pi/22)\sin(7\pi/22)}.
\end{equation}
Therefore
\[
 \Res_{s=3/11}\xi_{D_4}(s)
 =\frac{\zeta_{\mathrm R}(3/11)}{11}(3\Per_O+\Per_C).
\]
The exact reductions in this section are examples; no universal gamma
summation is inferred from them.

\end{document}